\documentclass[a4paper,11pt,oneside,reqno]{amsart}
\usepackage[latin1]{inputenc}
\usepackage[all]{xy}
\usepackage{amssymb, amsmath, amscd, geometry}
\usepackage{graphicx}
\usepackage{setspace}
\geometry{a4paper,twoside,top=2cm,bottom=2cm,left=2cm,right=2cm,headsep=1cm,headheight=3mm}

\input xy
\xyoption{all}

\theoremstyle{plain}

\newtheorem{problem}{Problem}
\newtheorem{definition}{Definition}

\newtheorem{corollary}{Corollary}

\newtheorem{remark}{Remark}
\theoremstyle{plain}
\newtheorem{teore}{Theorem}[section]
\newtheorem{defin}[teore]{Definition}
\newtheorem{lem}[teore]{Lemma}
\newtheorem{coro}[teore]{Corollary}
\newtheorem{propo}[teore]{Proposition}

\newtheorem{claim}{Claim}[teore]
\newtheorem*{claim*}{Claim}
\newtheorem*{thm*}{Theorem}
\newtheorem*{defi*}{Definition}
\theoremstyle{remark}
\newtheorem{ejemplo}{{\sc Example}}
\newtheorem{ejemplos}[teore]{{\sc Examples}}
\newtheorem{notas}[teore]{{\sc Remark}}
\newcommand{\nrm}[1]{\|#1\|}

\newcommand{\prop}{\begin{propo}}
\newcommand{\fprop}{\end{propo}}
\newcommand{\cor}{\begin{coro}}
\newcommand{\fcor}{\end{coro}}

\newcommand{\defi}{\begin{defin}\rm}
\newcommand{\fdefi}{\end{defin}}
\newcommand{\eje}{\begin{ejemplo}}
\newcommand{\feje}{\end{ejemplo}}
\newcommand{\ejes}{\begin{ejemplos}}
\newcommand{\fejes}{\end{ejemplos}}
\newcommand{\lema}{\begin{lem}}
\newcommand{\flema}{\end{lem}}
\newcommand{\teor}{\begin{teore}}
\newcommand{\fteor}{\end{teore}}
\newcommand{\nota}{\begin{notas}\rm}
\newcommand{\fnota}{ \end{notas}}
\newcommand{\clam}{\begin{claim}}
\newcommand{\fclam}{\end{claim}}
\newcommand{\clams}{\begin{claim*}}
\newcommand{\fclams}{\end{claim*}}

\newcommand{\lclam}{\begin{lclaim}}
\newcommand{\flclam}{\end{lclaim}}

\newcommand{\ben}{\begin{enumerate}}
\newcommand{\een}{\end{enumerate}}
\newcommand{\bit}{\begin{itemize}}

\newcommand{\eit}{\end{itemize}}

\newcommand{\mc}[1]{\mathcal{#1}}

\newcommand{\casos}{\begin{itemize}}
\newcommand{\fcasos}{\end{itemize}\setcounter{cs}{1}}


\newcommand{\conj}[2]{ \{ {#1}\,:\,{#2} \} }

\newcommand{\Ga}{\Gamma}

\newcommand{\de}{\delta}
\newcommand{\De}{\Delta}
\newcommand{\la}{\lambda}

\newcommand{\sig}{\sigma}

\newcommand{\vphi}{\varphi}

\newcommand{\vep}{\varepsilon}

\newcommand{\N}{{\mathbb N}}

\newcommand{\prue}{\begin{proof}}
\newcommand{\fprue}{\end{proof}}


\begin{document}
\title{Bases of random unconditional convergence in Banach spaces}
\author[J. Lopez-Abad]{J. Lopez-Abad}
\address{J. Lopez-Abad \\ Instituto de Ciencias Matem\'aticas (ICMAT).  CSIC-UAM-UC3M-UCM. C/ Nicol\'{a}s Cabrera 13-15, Campus Cantoblanco, UAM
28049 Madrid, Spain.}
\address{Instituto de Matem\'atica e Estat\'{\i}stica - IME/USP
Rua do Mat\~{a}o, 1010 - Cidade Universit\'aria, S\~{a}o Paulo - SP, 05508-090, Brasil.}

\email{abad@icmat.es}

\author[P. Tradacete]{P. Tradacete}
\address{P. Tradacete\\Mathematics Department\\ Universidad Carlos III de Madrid\\ 28911, Legan\'es, Madrid, Spain.}
\email{ptradace@math.uc3m.es}

\thanks{Both authors have been partially supported by the Spanish Government Grant MTM2012-31286 and Grupo UCM 910346. The first author acknowledges the support of Fapesp, Grant 2013/24827-1. The second author has also been partially supported by the Spanish Government Grant MTM2010-14946.}

\subjclass[2010]{46B09, 46B15}
\keywords{Unconditional basis, Random unconditional convergence.}

\begin{abstract}
We study random unconditional convergence for a basis in a Banach space. The connections between this notion and classical unconditionality are explored. In particular, we analyze duality relations, reflexivity, uniqueness of these bases and existence of unconditional subsequences.
\end{abstract}
\maketitle

\section{Introduction}

A series $\sum_n x_n$ in a Banach space is \emph{randomly unconditionally convergent} when $\sum_n \vep_n x_n$ converges almost surely on signs $(\vep_n)_n$ (with respect to the Haar probability measure on $\{-1,1\}^\N$). P. Billard, S. Kwapie\'n, A. Pelczy\'nski and Ch. Samuel introduced  in  \cite{BKPS} the notion of random unconditionally convergent (RUC)  coordinate systems $(e_i)_i$ in a Banach space, which have the property that the expansion of every element is randomly unconditionally convergent. Equivalently, a RUC system $(e_i,e_i^*)_i$ in a Banach space satisfies that for a certain constant $K$ and every $x$ in the span of $(e_i)_i$
$$
\sup_n\int_0^1\Big\|\sum_{i=1}^n r_i(t)e_i^*(x)e_i\Big\|dt\leq K\|x\|
$$
where $(r_i)$ is the  sequence of Rademacher function on $[0,1]$. For a RUC Schauder basis $(e_n)$, this is equivalent to
$$
\int_0^1\Big\|\sum_{i=1}^m r_i(t)a_je_i\Big\|dt\leq K\Big\|\sum_{i=1}^ma_ie_i\Big\|
$$
for some constant $K$ independent of the scalars $(a_i)_{i=1}^m$.

It is therefore natural to consider also bases (or more generally, systems) satisfying a converse inequality, i.e.
$$
\|x\|\leq K\int_0^1\Big\|\sum_{i=1}^m r_i(t)e_i^*(x)e_i\Big\|dt.
$$
These will be called random unconditionally divergent (RUD) and satisfy a natural duality relation with RUC systems. These two notions, weaker than that of unconditional basis, are the central objects for our research in this paper.

The search for bases or more general coordinate systems in Banach spaces is a major theme both within the theory and its applications to other areas (signal processing, harmonic analysis...) A basis allows us to represent a space as a space of sequences of scalars via the coordinate expansion of each element. Several interesting properties for bases have been investigated as they provide better, or more efficient ways, to approximate an element in a Banach space. Recall that a a sequence $(x_n)$ of vectors in a Banach space $X$ is called a basis (or Schauder basis) if every $x\in X$ can be written in a unique way as $x=\sum_{n=1}^\infty a_n x_n$, where $(a_n)$ are scalars. It is well-known that this is equivalent to the fact that the projections $P_n(x)=\sum_{i=1}^n a_i x_i$ are uniformly bounded. Among bases, the unconditional ones play a relevant role, as they provide certain extra structure to the space. A basis $(x_n)$ is called  unconditional when the corresponding expansions $\sum_{n=1}^\infty a_n x_n$ converge unconditionally. This is equivalent to the fact that for every choice of signs $\epsilon=(\epsilon_n)$ we have a bounded linear operator $M_{\epsilon}(\sum_{n=1}^\infty a_n x_n)=\sum_{n=1}^\infty \epsilon_na_n x_n$.

There has been considerable interest in finding unconditional basic sequences in Banach spaces.  Since the celebrated paper of W. T. Gowers and B. Maurey \cite{Gowers-Maurey}, we know that not every Banach space contains an unconditional basic sequence. In order to remedy this, weaker versions of unconditionality, such as Elton-unconditionality or Odell-unconditionality, have been considered in the literature \cite{Elton, Odell}. RUC and RUD bases also provide a weakening of unconditionality so several questions arise in a natural way. We will study the relation of these two notions with reflexivity in the spirit of the classical James' theorem \cite{James}, we will investigate the uniqueness of RUC (respectively, RUD) bases in a Banach space, as in \cite{LP}, and several other questions related to unconditionality of subsequences and blocks of a given sequence.

Our approach will begin with some probabilistic observations to illustrate the definition of RUC and RUD bases. We will see how the two notions are related by duality and that they also complement each other, in the sense that a basis which is both RUC and RUD must be unconditional. After this grounding discussion, a list of examples of classical bases which are RUC and/or RUD will be given.

Let us point out a major difference with unconditionality: every block-subsequence of an unconditional basis is also unconditional, whereas this stability may fail for RUC and RUD bases. Actually, every separable Banach space can be linearly embedded in a space with an RUC basis (namely, $C[0,1]$). This follows from \cite{Wojtaszczyk} where it is shown that if a space with a basis contains $c_0$, then it has a RUC basis.

This fact also provides a justification for the hypothesis in our version of James reflexivity theorem in this context (Theorem \ref{James_thm}): Suppose that every block-subsequence of a basis $(x_n)$  is RUD, then $(x_n)$ is shrinking if and only if $X$ does not contain a subspace isomorphic to $\ell_1$. Similarly, if every block subsequence of $(x_n)$  is RUC, then $(x_n)$ is boundedly complete if and only if $X$ does not contain a subspace isomorphic to $c_0$.

Another point worth dwelling on is motivated by the classical theorem of J. Lindenstrauss and A.  Pe\l czynski: the only Banach spaces with a unique, up to equivalence, unconditional basis are $c_0$, $\ell_1$ and $\ell_2$ \cite[2.b]{LT1}. In this respect, it was shown in \cite{BKPS} that all RUC bases of $\ell_1$ are equivalent, and they must be then unconditional; since it is known that conditional RUC bases of $c_0$ and $\ell_2$ exist, $\ell_1$ stands as the only space with this property. However, the situation for the uniqueness of RUD bases is more involved. Of course, the standard argument leaves $c_0$ as the only possible candidate, nevertheless, using a well-known construction of $\mathcal{L}_\infty$ spaces by J.  Bourgain and F. Delbaen \cite{BD}, we will provide a RUD basis of $c_0$ which is not equivalent to the unit vector basis. As a consequence, every Banach space with a RUD basis has another non-equivalent RUD basis.

Let us also recall the first example of a weakly null sequence with no unconditional subsequences due to B.  Maurey and H. P. Rosenthal \cite{MR}. It can be seen that this construction also produces an example of a weakly null sequence with no RUD subsequence (Theorem \ref{Maurey-Rosenthal}). Based on this we can provide a weakly null RUC basis without unconditional subsequences (Theorem \ref{MR-RUC}). Using equi-distributed sequences of signs, a modification of Maurey-Rosenthal construction can be given to build a RUD basis without unconditional subsequences (Theorem \ref{MR-RUD}). Moreover, this example also shows that normalized blocks of a RUD basis need not be RUD. Incidentally, the construction of a weakly null sequence in the space $L_1$ without unconditional subsequences given in \cite{JMS} by W. Johnson, B. Maurey and G. Schechtman, can also be taken to be RUD. In fact, it will be shown that on r.i. spaces which are separated from $L_\infty$ (in the sense that the upper Boyd index is finite) every weakly null sequence has an RUD subsequence (Theorem \ref{ri RUD}).

The research on RUC and RUD bases gives rise to a number of natural  questions concerning unconditionality in Banach spaces. Among them, the fundamental question of whether every Banach space contains an RUD or an RUC basic sequence remains open.

Throughout the paper we follow standard terminology concerning Banach spaces as in the monographs \cite{LT1, LT2}, and for questions related to probability the reader is referred to \cite{Ledoux-Talagrand} and \cite{Loeve}.

\section{RUC and RUD bases}

\begin{definition}
A series   $\sum_n x_n$ in a Banach  space is \emph{randomly unconditionally convergent} when $\sum_n \vep_n x_n$ converges
almost surely on signs $(\vep_n)_n$ with respect to the Haar probability measure on $\{-1,1\}^\N$, or,
equivalently, when the series $\sum_n r_n(t)x_n$ converges almost surely with respect to the Lebesgue measure
on $[0,1]$, where  $(r_n(t))_n$  is the \emph{Rademacher} sequence in $[0,1]$.
\end{definition}
Since the convergence  does not
depend on finitely many changes, it follows from the corresponding 0-1 law that  either  $\sum_n \vep_n x_n$ converges a.s. or   $\sum_n \vep_n x_n$ diverges a.s.   (see \cite[pp 7]{Ka} for more details).
Recall the following fact, know as the {\it contraction principle}.
\prop
Suppose that $\sum_n r_n(t)x_n$ converges a.s. Then for every sequence $(a_n)_n$, $\sup_n |a_n|\le 1$, one
has that $\sum_n a_n r_n(t)x_n$ also converges a.s.
 \qed
 \fprop
Consequently, the sequence $(r_n x_n)_n$ in the Bochner space $L_1([0,1],X)$ is a 1-unconditional basic sequence.

We recall the corresponding expected value
\begin{equation}\label{oiioe4joigfofg}
\mathbb{E} \Big(\Big\|\sum_{n=1}^m \epsilon_n x_n \Big\| \Big)=\frac{1}{2^m}\sum_{(\epsilon_n)\in\{-1,+1\}^m}\Big\|\sum_{n=1}^m \epsilon_n x_n \Big\|=\int_0^1\Big\|\sum_{n=1}^m r_n(t)x_n\Big\|_Xdt.
\end{equation}

It is shown by J. P. Kahane \cite[Theorem 4]{Ka} that if $\sum_n r_n(t) x_n$ converges a.s., then $\mathbb{E} \nrm{\sum_{n}r_n(t) x_n}<\infty$, i.e.
the $X$-vector valued function $\sum_n r_n(t) x_n$ belongs to the \emph{Bochner}  space $L_1([0,1],X)$. The converse is also true when $(x_n)_n$ is basic.

\prop
Suppose that $(x_n)_n$ is basic and  $(a_n)_n$ is a sequence such that the series $\sum_n a_n r_n(t) x_n $
Bochner-converges. Then $\sum_n r_n(t) a_n x_n$ converges almost surely.
 \fprop
\begin{proof}
 Suppose that $(s_n(t))_{n}$ converges to an $X$-valued Bochner measurable function $f$, where $s_n(t):=\sum_{i=1}^n a_i r_i(t) x_i$ for every $n$. This means that $\int_0^1 \nrm{s_n(t)-f(t)}_X\to_n 0$.
 Hence, $\nrm{s_n(t)-f(t)}_X\to_n 0$ in probability.  It follows that there is a subsequence $\nrm{s_{n_k}(t)-f(t)}_X\to_k 0$ almost surely.
  In particular, $(s_{n_k}(t))_k$ is a Cauchy sequence almost surely. We prove that $(s_{n}(t))_n$ is in fact a Cauchy
  sequence almost surely: Let
 $$A:=\conj{t\in [0,1]}{(s_{n_k}(t))_k\text{ is a Cauchy sequence}}.$$
 By hypothesis, $\la(A)=1$. Then $(s_n(t))_n$ is Cauchy for every $t\in A$: Let $C$ be the basic constant
  of $(x_n)_n$, and given $\vep>0$, let $k_\vep$ be such that $\nrm{s_{n_k}(t)-s_{n_l}}\le \vep/(2C)$ for every
  $k,l \ge k_\vep$. Then, using that $(x_n)_n$ is $C$-basic, if $n_{k_\vep}\le m\le n$, it follows that
 \begin{align*}
 \nrm{s_m(t)-s_n(t)}\le 2C\nrm{s_{n_{k_\vep}}(t)- s_{n_l}(t)}\le \vep,
 \end{align*}
  where $l$ is such that $n_l\ge n$. We have just proved that $\sum_n r_n(t) a_n x_n$ converges almost surely to $f(t)$.
\end{proof}

A complete account on series of the form $\sum_n \epsilon_n x_n$, also referred to as Rademacher averages, can be found in \cite[Chapter 4]{Ledoux-Talagrand}.

\subsection{Definition and basic properties}

\begin{definition}
A  basic sequence $(x_n)_n$ in a Banach space $X$ is of  \emph{Random Unconditional Convergence} (a RUC basis in short)
when every convergent series $\sum_n a_n x_n$ is randomly unconditionally convergent.

A basic sequence $(x_n)_n$ of $X$ is called of \emph{Random Unconditional Divergence} (RUD basis in short) when
whenever a series $\sum_n a_n x_n$ is randomly unconditional, the series $\sum_n a_n x_n$ is convergent, or equivalently,
the only randomly unconditional series $\sum_n a_n x_n$ are the unconditional ones.
\end{definition}

It is clear that the definition extends to biorthogonal systems in a natural way. The terminology is justified by the 0-1 law implying that  $(x_n)_n$ is RUD if and only if for every
divergent series $\sum_n a_n x_n$ the signed series $\sum_n \vep_n a_n x_n$ diverges almost surely. RUC
bases are those with the maximal number of random unconditionally convergent series, while RUD bases are those with the minimal number of them, only the unconditional ones.

\prop
A basic sequence  is unconditional if and only if it is RUC and RUD.
\fprop
\prue
Suppose that $(x_n)_n$ is a RUC and RUD basic sequence, suppose that $\sum_n a_n x_n$ converges and let $(\sig_n)_n$ be a sequence of signs. We have to prove that $\sum_n \sig_n a_n x_n$ also converges.  Suppose otherwise that $\sum_n \sig_n a_n x_n$ diverges. Since $(x_n)_n$ is RUC, it follows that $\sum_n \vep_n\sig_n a_n x_n$ diverges a.s. in $(\vep_n)_n$, or equivalently, $\sum_n \vep_n a_n x_n$ diverges a.s. Since $(x_n)_n$ is RUC, it follows that $\sum_n a_n x_n$ diverges, a contradiction.
\fprue

RUC sequences
were introduced by  P. Billard, S. Kwapie\'n, A. Pelczy\'nski and Ch. Samuel in \cite{BKPS}, where they prove
the following  quantitative characterization for RUC biorthogonal systems.
 \prop\label{ljejrejriedfd}
For a basic sequence $(x_n)_n$ in $X$ the following are equivalent.
\begin{enumerate}
\item[(a)] $(x_n)_n$ is RUC.
\item[(b)] There is a constant $C$ such that for every $n\in \N$ and every sequence of scalars $(a_i)_{i=1}^n$ one has that
\begin{equation}
\label{ljjgijfgf}  \mathbb E \nrm{\sum_{i=1}^n \vep_i a_i x_i}\le C\nrm{\sum_{i=1}^n a_i x_i}.
\end{equation}
\end{enumerate}
 \fprop
In a similar way,  we have the following.
\prop
Let $(x_n)_n$ be a basic sequence of $X$. The following are equivalent.
\begin{enumerate}
\item[(a)] $(x_n)_n$ is RUD.
\item[(b)] There is a constant $C$ such that for every $n\in \N$ and every sequence of scalars $(a_i)_{i=1}^n$ one has that
\begin{equation}\label{knlknlknjr5565f}
\nrm{\sum_{i=1}^n a_i x_i}\le C\mathbb E \nrm{\sum_{i=1}^n \vep_i a_i x_i}.
\end{equation}
\end{enumerate}
 \fprop
\begin{proof}
Suppose that $(x_n)_n$ is RUD. This implies that $\sum_n a_n x_n$ converges whenever $\sum_n \vep_n a_n x_n$
a.s. converges. Let $Y$ be the closed subspace of the Bochner space $L_1([0,1],X)$ spanned by
$(r_n(t)x_n)_{n\in \N}$. Since  $(r_n(t)x_n)_n$ is a 1-unconditional basis of $Y$,  for each $n\in \N$, the
linear operator $S_n:Y\to X$  defined by $S_n(\sum_i a_i r_i(t)x_i)=\sum_{i=1}^n a_ix_i$ is well defined and bounded. Now, for
a fixed $y=\sum_i a_i r_i(t)x_i\in Y$ we know by hypothesis that $\sum_i a_i x_i$ converges; since $(x_i)_i$ is a basic sequence, with basic constant $K$, it follows that
\begin{equation}
\nrm{S_n(y)}=\nrm{\sum_{i=1}^n a_i x_i}\le K
\nrm{\sum_{i=1}^\infty a_i x_i}
\end{equation}
for every $n$. Hence, by the Banach-Steinhaus
principle, it follows that that $C:=\sup_n \nrm{S_n}<\infty$, that is,
\begin{equation}
\nrm{\sum_{i=1}^n a_i x_i}\le C\nrm{\sum_{i=1}^\infty r_i(t) a_i x_i}_{L_1([0,1],X)}.
\end{equation}
For a fixed $n$, if we replace $(a_i)_i$ by $(b_i)_i$ where $b_i=a_i$ for $i\le n$ and $b_i=0$ otherwise, we
obtain the inequality in \eqref{knlknlknjr5565f}.

Suppose now that \eqref{knlknlknjr5565f} holds for every $n$ and every $(a_i)_{i=1}^n$.   Suppose that
$\sum_{n}a_n x_n$ diverges. If $\sum_{n}\vep_n a_n x_n$ does not diverge  a.s., by the 0-1 Law, it converges
a.s.  Hence, by Kahane's result, it follows that $\mathbb E_{(\vep_i)}\nrm{\sum_i \vep_i a_i x_i}<\infty$, or equivalently,
$\sum_i a_i r_i(t)x_i $ converges in $L_1([0,1],X)$. It follows that $(\sum_{i=1}^n a_i r_i(t)x_i)_{n}$ is a
Cauchy sequence. Now, the inequality in \eqref{knlknlknjr5565f} implies that $(\sum_{i=1}^na_i x_i)_{n}$ is also
Cauchy, a contradiction.
\end{proof}

\begin{remark}{\rm
\begin{enumerate}
\item[(a)] Sequences that satisfy the inequality in \eqref{ljjgijfgf} are obviously biorthogonal, and in fact the characterization in Proposition \ref{ljejrejriedfd} is still valid for biorthogonal sequences.
\item[(b)] On the other hand, an arbitrary  semi normalized sequence satisfying the inequality in \eqref{knlknlknjr5565f} must have basic subsequences: By applying Rosenthal's $\ell_1$ Theorem to $(r_n(t) x_n)_n$, there are two cases to consider: suppose first that there is a subsequence $(r_n(t) x_n)_{n\in M}$ equivalent to the unit basis of $\ell_1$. It follows then that there is a subsequence $(x_n)_{n\in N}$ equivalent to the unit basis of $\ell_1$ (see Proposition \ref{iuuiuiere}), hence basic. Otherwise, there is a weakly-Cauchy subsequence $(r_n(t) x_n)_{n\in M}$. Since this sequence is 1-unconditional, it must be weakly-null: otherwise, $(r_n(t) x_n)_{n\in M}$ is not weakly-convergent, hence it has a basic subsequence $(r_n(t) x_n)_{n\in N}$ which dominates the summing basis of $c_0$; since $(r_n(t)x_n)_{n\in N}$ is unconditional and bounded, it will be equivalent to the unit basis of $\ell_1$, so it cannot be weakly-Cauchy. Now from the fact that $(r_n(t)x_n)_{n\in M}$ is weakly-null and the inequality in  \eqref{knlknlknjr5565f} it follows that  $(x_n)_{n\in M}$ is also weakly-null, and consequently it has a further basic subsequence.
\item[(c)]There is a significant difference if almost everywhere convergence of the series $\sum_{i=1}^n \epsilon_i a_i x_i$ is replaced by quasi-everywhere convergence, that is when the set of signs for which the series converges contains a dense $G_\delta$. This last condition is equivalent to the unconditionality of the basic sequence $(x_i)_i$, as it has been proved by P. Lefevre in \cite{Lefevre}.

\end{enumerate}}
\end{remark}

\defi
A RUC (RUD) basic sequence $(x_n)_n$ is $C$-RUC ($C$-RUD) when the inequality  in \eqref{ljjgijfgf}   (resp. \eqref{knlknlknjr5565f}) holds.   The corresponding RUC and RUD constants are defined naturally as
\begin{align*}
\mathrm{RUC}((x_n)_n):= &\inf\{C>0: \|\sum_{i=1}^n a_i e_i\|\ge \frac 1 C \mathbb{E} \Big(\Big\|\sum_{i=1}^n \epsilon_i a_i x_i \Big\| \Big)\},\\
\mathrm{RUD}((x_n)_n)):= &\inf\{C>0: \|\sum_{i=1}^n a_i x_i\|\leq C\mathbb{E} \Big(\Big\|\sum_{i=1}^n \epsilon_i a_i x_i \Big\| \Big)\},
\end{align*}
where the infimums are taken over all finite sequences $(a_i)_{i=1}^n$ of scalars.
\fdefi

It is also clear from the definition is that if $(e_n)$ is RUC (RUD), then for any choice of scalars
$\lambda_n$, the sequence $(\lambda_n e_n)$ is also RUC (resp. RUD) (with the same constant).

Since we always have the inequalities
\begin{equation}
\min_{\tau_n=\pm1}\Big\|\sum_{n=1}^m \tau_n a_n x_n \Big\| \leq\mathbb{E} \Big(\Big\|\sum_{n=1}^m \epsilon_n a_n x_n \Big\| \Big)\leq \max_{\tau_n=\pm1}\Big\|\sum_{n=1}^m \tau_n a_n x_n \Big\|
\end{equation}
it follows that the RUC and RUD constants, if they exist, are at least 1. In fact, we have the following simple characterizations.

\prop
\label{characterization_ruc}
Let  $(x_n)_n$ be a basic sequence. The following are equivalent:
\begin{enumerate}
\item $(x_n)_n$ is $C$-RUC.
\item For any sequence of scalars $(a_n)_{n=1}^m$ we have
$$
\min_{\tau_n=\pm1}\Big\|\sum_{n=1}^m \tau_n a_n x_n \Big\| \leq \mathbb{E} \Big(\Big\|\sum_{n=1}^m \epsilon_n a_n x_n \Big\| \Big)\leq   C
\min_{\tau_n=\pm1}\Big\|\sum_{n=1}^m \tau_n a_n x_n \Big\|.
$$
\end{enumerate}
 Consequently,  $(x_n)_n$ is 1-RUC if and only if $(x_n)_n$ is 1-unconditional.
 \fprop

\prop
\label{characterization_sub}
Let  $(x_n)_n$ be a basic sequence. The following are equivalent:
\begin{enumerate}
\item $(x_n)_n$ is $C$-RUD.
\item For any sequence of scalars $(a_n)_{n=1}^m$ we have
$$
\mathbb{E} \Big(\Big\|\sum_{n=1}^m \epsilon_n a_n x_n \Big\| \Big)\leq \max_{\tau_n=\pm1}\Big\|\sum_{n=1}^m \tau_n a_n x_n \Big\|\leq C \mathbb{E} \Big(\Big\|\sum_{n=1}^m \epsilon_n a_n x_n \Big\| \Big).
$$
\end{enumerate}
 Consequently,  $(x_n)_n$ is 1-RUD if and only if $(x_n)_n$ is 1-unconditional.
\fprop
In the case of RUC basic sequences, we can always
renorm the space to get RUC-constant as close to one as desired. We do not know if the same is true for RUD basic sequences.
\prop
Let $(x_n)$ be a RUC basic sequence in  $X$. For every $\delta>0$ there is an equivalent norm in $X$ such
that $(x_n)$ is $(1+\delta)$-RUC, although there are examples for every $\de>0$ of $(1+\de)$-RUD sequences without unconditional subsequences (see Theorem \ref{MR-RUD}).
\fprop

\begin{proof}
Without loss of generality, we may assume that $(x_n)_n$ is a basis of $X$. Let $\|\cdot\|$ denote the norm in $X$ such that for some $C>1$
$$
\mathbb{E}\Big\|\sum_{n}a_n\vep_n x_n\Big\|\leq C \Big\|\sum_{n}a_n x_n\Big\|.
$$
Given $\delta>0$, let us define a new norm
$$
\Big\|\sum_{n}a_n x_n\Big\|_{\delta}=\mathbb{E}\Big\|\sum_{n}a_n\vep_n x_n\Big\|+\delta\Big\|\sum_{n}a_n x_n\Big\|.
$$
It is clear that
$$
\delta\|\cdot\|\leq\|\cdot\|_\delta\leq(C+\delta)\|\cdot\|,
$$
while we have
\begin{align*}
\mathbb{E}\Big\|\sum_{n}a_n\vep_n x_n\Big\|_\delta=  &\mathbb{E}\Big(\mathbb{E}\Big\|\sum_{n}a_n\vep_n x_n\Big\|+\delta\Big\|\sum_{n}a_n\vep_n x_n\Big\|\Big)=(1+\delta)\mathbb{E}\Big\|\sum_{n}a_n\vep_n x_n\Big\|\leq \\
\leq & (1+\delta)\Big\|\sum_{n}a_n x_n\Big\|_\delta.
\end{align*}
\end{proof}

The signs-average given above is equivalent (i.e. up to a universal constant) to the following
subsets-average.
$$
\mathbb{E}_0 \Big(\Big\|\sum_{n=1}^m \theta_n x_n \Big\|\Big)=\frac{1}{2^m}\sum_{(\theta_n)\in\{0,1\}^m}\Big\|\sum_{n=1}^m \theta_n x_n \Big\|=\frac{1}{2^m}\sum_{A\subset\{1,\ldots,m\}}\Big\|\sum_{n\in A}x_n \Big\|.
$$
More precisely,
$$
\mathbb{E}_0 \Big(\Big\|\sum_{n=1}^m \theta_n x_n \Big\|\Big) \leq \mathbb{E} \Big(\Big\|\sum_{n=1}^m \epsilon_n x_n \Big\| \Big) \leq 2 \mathbb{E}_0 \Big(\Big\|\sum_{n=1}^m \theta_n x_n \Big\|\Big).
$$

It is also natural to consider random versions of symmetric bases. For instance, if $\Pi_n$ denotes the group of permutations of $\{1,\dots,n\}$, and we consider a finite basis $(x_i)_{i=1}^n$ and scalars $(a_i)_{i=1}^n$, we can define
$$
\mathbb E_\pi \nrm{\sum_{i=1}^n a_{\pi(i)} x_{i}}:=\frac{1}{n!}\sum_{\pi\in \Pi_n}\nrm{\sum_{i=1}^n a_{\pi(i)} x_{i}}.
$$
Hence, we say that a basis $(x_i)$ is of Random Symmetric Convergence (RSC in short) with constant $C$ when for every $n\in\mathbb N$ and scalars $(a_i)_{i=1}^n$
\begin{equation}
\mathbb E_\pi \nrm{\sum_{i=1}^n a_{\pi(i)} x_{i}}\le C \nrm{\sum_{i=1}^n a_i x_{i}}.
\end{equation}
Similarly, $(x_i)$ is of Random Symmetric Divergence
(RSD in short) with constant $C$ when
\begin{equation}
\nrm{\sum_{i=1}^n a_i x_{i}} \le C \mathbb E_\pi \nrm{\sum_{i=1}^n a_{\pi(i)} x_{i}}
\end{equation}
for every choice of $n$ and scalars $(a_i)_{i=1}^n$. The research of these notions will be carried out elsewhere.

Recall that given an integer $k$ and a property $\mc P$ of sequences in a given space $X$ we say that a sequence $(x_n)_n$ has the $k-$skipping property $\mc P$ when every subsequence $(x_{n_i})_{i}$ of $(x_n)_n$ has the property $\mc P$ provided that $n_{i+1}-n_{i}\ge k$.

\prop\label{hohriogihohgfhg}
Let $(x_n)_{n\in I}$ be a basic sequence in $X$, $I$ finite or infinite.
\begin{enumerate}
\item[(a)]  If $(x_n)_n$ is $k$-skipping RUD for some $k\in \N$, then it is RUD.
In fact, suppose that $I=P_1\cup \cdots \cup P_k$ is a partition of $I$ such that each subsequence $(x_n)_{n\in P_i}$ is RUD with constant $C_i$,
$i=1,\dots,k$, then $(x_n)_{n\in I}$ is RUD with constant $\le \sum_{i=1}^kC_i$.
\item[(b)]    Suppose that $(x_n)_n$ is a RUC basis of $X$. Then every unconditional subsequence of it generates a complemented
subspace of $X$.
\end{enumerate}
\fprop
\begin{proof}
(a):  Suppose that $\sum_n r_n(t) a_n x_n$ converges a.s. It follows from the contraction principle that each $\sum_{n\in P_i}r_n(t) a_n x_n$, $i=1,\dots,n$, is also convergent a.s. Hence each series $\sum_{n\in P_i} a_n x_n$ converges, $i=1,\dots,n$, and consequently also $\sum_n a_n x_n$ converges. As for the constants:       Fix $n$ and scalars $(a_i)_{i=1}^n$. Then
\begin{align*}
\nrm{\sum_{i=1}^n a_i x_i}\le & \sum_{j=1}^k\nrm{\sum_{i\in   P_j\cap \{1,\dots, n\} } a_i x_i   } \le  \sum_{j=1}^k C_j\mathbb E_{\vep}
\nrm{\sum_{i\in   P_j\cap \{1,\dots, n\} } \vep_i a_i x_i }\le \\
\le &\sum_{j=1}^k C_j  \mathbb E_{\vep}
\nrm{\sum_{i=1  }^n \vep_i a_i x_i}.
\end{align*}
(b): Suppose that $(x_n)_{n\in M}$ is unconditional. We claim that the boolean projection $\sum_{n}a_n x_n
\mapsto \sum_{n\in M }a_n x_n$ is bounded:
\begin{align*}
\nrm{\sum_{n\in M}a_n x_n}\approx \mathbb E_\vep \nrm{\sum_{n\in M}\vep_n a_n x_n} \le  \mathbb E_\vep \nrm{\sum_{n}\vep_n a_n x_n} \lesssim \nrm{\sum_{n} a_n x_n} .
\end{align*}
\end{proof}

\begin{corollary}\label{ufdd}
Suppose $X$ is a Banach space with an unconditional f.d.d. $(F_n)_n$ such that
$$
\sup_n \dim F_n<\infty.
$$
Then $X$ has a RUD basis.
\end{corollary}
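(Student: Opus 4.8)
The plan is to produce the RUD basis by refining the given decomposition: concatenate a linear basis of each block $F_n$ and then recognize the resulting basis as a finite union of unconditional subsequences, so that Proposition \ref{hohriogihohgfhg}(a) delivers the conclusion.

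First I would set $d:=\sup_n\dim F_n<\infty$ and let $K$ be the unconditionality constant of the f.d.d. $(F_n)_n$. For every $n$ choose an algebraic basis $(x_i)_{i\in I_n}$ of $F_n$, where $I_n$ is a set of $d_n:=\dim F_n$ consecutive integers with $\max I_n<\min I_{n+1}$, and write $I=\bigcup_n I_n$ and $I_n=\{i^n_1<\dots<i^n_{d_n}\}$. Since $(F_n)_n$ is an f.d.d., the concatenated sequence $(x_i)_{i\in I}$ is a Schauder basis of $X$ (the standard fact that splitting each component of an f.d.d. into a basis yields a basis of the whole space). Now for $j=1,\dots,d$ put $P_j:=\{\,i^n_j:\ n\in\N,\ d_n\ge j\,\}$; these sets partition $I$.

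The key observation is that each subsequence $(x_i)_{i\in P_j}$ is $K$-unconditional. Indeed, it is a basic sequence, being a subsequence of a basis, and for finitely supported scalars $(b_n)$ and signs $(\theta_n)$ one has $\|\sum_n\theta_n b_n x_{i^n_j}\|\le K\|\sum_n b_n x_{i^n_j}\|$ by applying the definition of unconditional f.d.d. to the vectors $u_n:=b_n x_{i^n_j}\in F_n$ (taking $u_n:=0$, with $\theta_n$ arbitrary, on the blocks with $d_n<j$). A $K$-unconditional basic sequence $(z_n)_n$ is in particular $K$-RUD, because $\|\sum_n b_n z_n\|\le K\|\sum_n\vep_n b_n z_n\|$ holds for every sign sequence and hence after averaging over $\vep$. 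Applying Proposition \ref{hohriogihohgfhg}(a) to the partition $I=P_1\cup\dots\cup P_d$ into $d$ many $K$-RUD subsequences, we conclude that $(x_i)_{i\in I}$ is RUD, with constant at most $dK$, and it is a basis of $X$.

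The only delicate point is the bookkeeping for blocks of varying dimension: one must split $I$ into exactly $d$ pieces, each meeting each $F_n$ in at most one index, so that the ``one vector per component'' blockings of an unconditional f.d.d. are again unconditional with the same constant; the hypothesis $\sup_n\dim F_n<\infty$ enters precisely here, to keep the number of pieces finite---which is what Proposition \ref{hohriogihohgfhg}(a) requires. Beyond this, I do not expect any real obstacle.
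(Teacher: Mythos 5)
Your proposal is correct and follows essentially the same route as the paper: concatenate bases of the blocks $F_n$ into a Schauder basis of $X$, observe that any subsequence meeting each block at most once is unconditional (hence RUD) by the unconditionality of the f.d.d., and invoke Proposition \ref{hohriogihohgfhg}(a) with the partition into $d=\sup_n\dim F_n$ pieces --- which is exactly what the paper phrases as the basis being ``$k$-skipping RUD''. The one point to tighten is that the inner bases of the $F_n$ must be chosen with uniformly bounded basic constant (e.g.\ Auerbach bases, whose constant is controlled by $d$) for the concatenation to be a Schauder basis of $X$, as the paper explicitly requires.
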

\begin{proof}
Choose for each $n$ a basis $(x_i^{(n)})_{i<k_n}$, $k_n:=\dim F_n$ with basic constant $\le C$, independent of $n$. Then $(x_i^{(n)})_{i<k_n,n\in \N}$ ordered naturally $(x_j)_j$ is a Schauder basis of $X$, and  it is $k$-skipping RUD.
\end{proof}

Let us establish now some duality relation between RUC and RUD bases. Recall that a functional $x^*\in X^*$ and a function $f\in L_2(0,1)$ always define an element in
$L_2((0,1),X)^*$ as follows: for any $g\in L_2((0,1),X)$
$$
f\otimes x^* (g):=\int_0^1 \langle x^*,g(t)\rangle f(t) dt.
$$

\prop\label{duality} Let $(x_n)_n$ be a basis of $X$.
\begin{enumerate}
\item If $(x_n)$ is $C$-RUC then every biorthogonal sequence $(x_n^*)$ is $2C$-RUD.
\item If $(x_n^*)_n$ is $C$-RUC, then $(x_n)_n$ is $C\cdot D$-RUD, where $D$ is the basic constant of $(x_n)_n$.
\end{enumerate}
\fprop

\begin{proof}
Suppose that $(x_n)$ is a RUC  basis of the space $X$ with RUC constant $C$, and let  $(x_n^*)\subset X^*$ be its  sequence of biorthogonal functionals.

Now, fix $\sum_{i=1}^n b_i x_i^*\in X^*$, and let $x=\sum_{i=1}^n a_i x_i$  be such that $\|x\|=1$ and
$$
\sum_{i=1}^n a_i b_i=\langle x, \sum_{i=1}^n b_i x_i^*\rangle=\|\sum_{i=1}^n b_ix_i^*\|.
$$
Since $(x_n)_n$ is RUC with RUC constant $C$, it follows from Khintchine-Kahane that
$$ \nrm{\sum_{ i=1}^n a_i r_i(t) x_i^*}_{L_2([0,1],X)} \le \sqrt{2}\nrm{\sum_{ i=1}^n a_i r_i(t) x_i^*}_{L_1([0,1],X)}\le \sqrt{2}C\nrm{x}=\sqrt{2}C.$$
Hence,
\begin{align*}
\nrm{\sum_{i=1}^n b_i r_i(t) x_i^*}_{L_1([0,1],X^*)}   \ge & \frac1{\sqrt{2}}
\nrm{\sum_{i=1}^n b_i r_i(t) x_i^*}_{L_2([0,1],X^*)} \ge \\
\ge & \frac1{2C}\langle \sum_{i=1}^n a_i r_i(t) x_i, \sum_{i=1}^n b_i r_i(t) x_i^* \rangle= \frac1{2C}  \sum_{i=1}^n a_i b_i = \\
= & \frac1{2C} \|\sum_{i=1}^n b_ix_i^*\|
\end{align*}
Hence, $(x_n^*)$ is RUD with basic constant $\le 2C$.

The proof of (2) is done similarly now observing that the unit sphere of $\langle x_n^*\rangle_n$ is $1/D$-norming, where $D$ is the basic constant of $(x_n)_n$.
\end{proof}
The corresponding duality result for RUD bases is not true in general (see Example \ref{summing}).
We will give now a version of James theorem characterizing shrinking and boundedly complete unconditional
basis in terms of subspaces isomorphic to $\ell_1$ and $c_0$.
\teor\label{James_thm} Let $(x_n)_n$ be a basis of a Banach space $X$.
\begin{enumerate}
\item Suppose that every block subsequence of $(x_n)$  is RUD. Then   $(x_n)$ is shrinking if and only if $X$ does not contain a subspace isomorphic to $\ell_1$.
\item Suppose that every block subsequence of $(x_n)$  is RUC. Then   $(x_n)$ is boundedly complete if and only if $X$ does not contain a subspace isomorphic to $c_0$.
\end{enumerate}
\fteor

\begin{proof}
$(1)$ Clearly, if $X$ contains a subspace isomorphic to $\ell_1$, then $\ell_\infty$ is a quotient of $X^*$.
Thus, $X^*$ is non-separable, and $(x_n)$ cannot be shrinking. Conversely, suppose that $(x_n)$ fails to be
shrinking. This means that for some $\varepsilon>0$ and $x^*\in X^*$ with $\|x^*\|=1$ we can find blocks
$(u_j)$ of the basis $(x_n)$ such that $x^*(u_j)\geq\varepsilon$ for every $j\in\mathbb{N}$. Since $(u_j)$ is
RUD, given scalars $(a_j)_{j=1}^m$ we have
\begin{align*}
\mathbb{E} \Big(\Big\|\sum_{j=1}^m \epsilon_j a_j u_j \Big\| \Big)= & \mathbb{E} \Big(\Big\|\sum_{j=1}^m \epsilon_j |a_j| u_j \Big\| \Big)\geq C \Big\|\sum_{j=1}^m |a_j| u_j\Big\|\geq Cx^*\big(\sum_{j=1}^m |a_j| u_j\big)\geq \\
\geq & C\varepsilon\sum_{j=1}^m |a_j|.
\end{align*}
Therefore, we have the equivalence
$$
\mathbb{E} \Big(\Big\|\sum_{j=1}^m \epsilon_j a_j u_j \Big\| \Big)\approx\sum_{j=1}^m |a_j|
$$
which, by a Result of Bourgain in \cite{Bo2} (see also Proposition \ref{iuuiuiere} below), implies that there
is a further subsequence $(u_{j_k})$ equivalent to the unit basis of $\ell_1$.

$(2)$: If $X$ has a subspace isomorphic to $c_0$, then it is easy to see that the basis $(x_n)$ cannot be
boundedly complete. Conversely, let us assume that $(x_n)$ is not boundedly complete. Thus, there exist
scalars $(\lambda_n)$ such that
$$
\sup_m\Big\|\sum_{n=1}^m \lambda_nx_n\Big\|\leq1,
$$
but the series
$$
\sum_{n=1}^\infty \lambda_nx_n
$$
does not converge. This means that for some increasing sequence of natural numbers $(p_k)_{k\in\mathbb{N}}$
and some $\varepsilon>0$ we have
$$
u_k=\sum_{j=p_{2k}+1}^{p_{2k+1}}\lambda_j x_j,
$$
with $\|u_k\|\geq\varepsilon$, for $k\in\mathbb{N}$. Hence, since $(u_k)$ is a block sequence, then it is
RUC, and we have
$$
\sup_m\int_0^1\Big\|\sum_{i=1}^m r_i(t)u_i\Big\|dt=\sup_m \mathbb{E} \Big(\Big\|\sum_{i=1}^m \epsilon_i u_i \Big\| \Big)\leq C\sup_m \Big\|\sum_{i=1}^m u_i\Big\|\leq C<\infty.
$$
By a result of Kwapien in \cite{Kw}  (see   Theorem \ref{oiio3rere} for more details), $(u_k)$ has a
subsequence equivalent to the unit basis of $c_0$.
\end{proof}

\begin{problem}
Suppose that $(x_n)_n$ is a basis of $X$ such that every block-subsequence of $(x_n)_n$ is RUC (equiv. RUD).
Is $(x_n)_n$ unconditional? More generally, does there exist an unconditional block-subsequence of $(x_n)_n$?
\end{problem}

We will see in Section \ref{Sec_RUD_Banach} that there exist conditional basis (namely, the Haar basis in $L_1$) such that every block subsequence  is RUD.

\subsection{Examples}

We will present next a list of examples of classical bases in Banach spaces, illustrating the notions of RUC and RUD bases. Let us begin with an example of a basis without RUC nor RUD subsequences.

\eje\label{summing}
The summing basis $(s_n)$ in $c_0$  does not have RUD or RUC subsequences, but its biorthogonal sequence in $\ell_1$ is RUD.
\feje

\begin{proof}
Recall that the $n^\mathrm{th}$ term $s_n$ of the summing basis is the sequence
$$ s_n:=\sum_{i=1}^n u_i=(\overset{(n)}{\overbrace{1,\dots,1}},0,0,\dots),$$
where $(u_n)_n$ is the unit basis of $c_0$. It follows that for any  finite subset $s$ of $\mathbb N$ and any
sequence of scalars $(a_i)_{i\in s}$ it holds that
$$
\Big\|\sum_{i\in s} a_i s_i\Big\|=\max_{m\in s}\Big|\sum_{i \in s,\, i\ge m} a_i\Big|.
$$
We claim that
$$
\mathbb{E}_\vep \Big(\Big\|\sum_{i\in s} \epsilon_i a_i s_i \Big\| \Big)\approx\Big(\sum_{i\in s} a_i^2\Big)^{\frac12}.
$$
Indeed, we have that
$$
\mathbb{E}_\vep \Big(\Big\|\sum_{i\in s} \epsilon_i a_i s_i \Big\| \Big)=
\int_0^1\max_{m\in s}\Big|\sum_{i\in s,\, i\ge m} a_ir_i(t)\Big|dt.
$$
Now, Levy's inequality (cf. \cite[2.3]{Ledoux-Talagrand}, \cite[p. 247]{Loeve}) yields
$$
\mu\{t\in[0,1]:\max_{m\in s}\Big|\sum_{i\in s,\, i\ge m}  a_ir_i(t)\Big|\geq s\} \leq 2\, \mu\{t\in[0,1]:\Big|\sum_{i\in s} a_ir_i(t)\Big|\geq s\}.
$$
Hence, this fact together with Khintchine's inequality give that
\begin{align*}
\frac1{\sqrt{2}}\Big(\sum_{i\in s} a_i^2\Big)^{\frac12} \le & \int_0^1 \Big|\sum_{i\in s} a_ir_i(t)\Big|dt    \le
  \int_0^1\max_{ m\in s}\Big|\sum_{i\in s,\, i\ge m} a_ir_i(t)\Big|dt\leq \\
  \leq &
 2\int_0^1\Big|\sum_{i\in s} a_ir_i(t)\Big|dt\leq 2\Big(\sum_{i\in s}a_i^2\Big)^{\frac12}.
\end{align*}
In particular, there is no constant $K\ge 1$ such that for every  finite subset $s$ of a given infinite
$N\subseteq \mathbb N$ we could have
$$
\sharp s=\Big\|\sum_{i\in s} s_i\Big\|\leq K \mathbb{E}_\vep \Big(\Big\|\sum_{i\in s }^m \epsilon_i s_i \Big\| \Big)\leq 2K\sqrt{\sharp s},
$$
and there is no constant $K\ge 1$ such that for every  $n_1<\dots <n_k$ in $ N$,
$$
1=\Big\|\sum_{i=1}^k (-1)^i s_{n_i}\Big\|\ge \frac1K \mathbb{E}_\vep \Big(\Big\|\sum_{i=1 }^k \epsilon_i (-1)^i s_i \Big\| \Big)\ge
 \frac{1}{K \sqrt{2}}
\sqrt{k}.
$$

The biorthogonal sequence $(s_n^*)_n$ in $\ell_1$ to $(s_n)_n$ is RUD: To see this, notice that
$s_n^*=u_n-u_{n+1}$ for every $n$, where $(u_n)_n$ is the unit basis of $\ell_1$. Hence, for every sequence
of scalars $(a_i)_{i=1}^n$ one has that $\nrm{\sum_{i=1}^n a_i
s_i^*}_1=|a_1|+\sum_{i=1}^{n-1}|a_i-a_{i+1}|+|a_n|$. Consequently,
\begin{align*}
\mathbb E_\vep \nrm{\sum_{i=1}^n a_i \vep_i s_i^*}_1= |a_1|+|a_n|+\sum_{i=1}^{n-1}\frac{1}{2}( |  a_i + a_{i+1}|+|a_i-a_{i+1}|)\ge \sum_{i=1}^n |a_i|.
\end{align*}
Since $\nrm{s_n^*}=2$ for every $n$, it follows that
\begin{equation}
\nrm{\sum_{i=1}^n a_i s_i^*}\le 2\mathbb E_\vep \nrm{\sum_{i=1}^n a_i \vep_i s_i^* }.
\end{equation}
\end{proof}

Note that proving the conditionality of $(s_n)$ is considerably simpler than showing that it is not RUC nor RUD, for which some probability technology is employed. In this case, Levy's inequality makes the trick, but for slightly more general situations other estimates like H\`ajek-R\'enyi inequality can be helpful \cite{HR}: If $X_1,\ldots, X_n$ are independent centered random variables, $S_k=\sum_{i=1}^k X_i$, and $c_1\geq c_2\geq\ldots\geq c_n\geq0$, then we have
$$
\mu\{\max_{1\leq k\leq n}c_k|S_k|\geq\varepsilon\}\leq\varepsilon^{-2}\int c_n^2S_n^2+\sum_{i=1}^{n-1}(c_i^2-c_{i+1}^2)S_i^2d\mu.
$$

Let us provide now an example of a RUD basis which is not unconditional. Recall first that James space $J$
\cite{James} is the completion of the space of eventually null sequences $c_{00}$ under the norm
$$
\|(a_n)_n\|_J=\sup\{\big(\sum_{k=1}^m(a_{p_k}-a_{p_{k+1}})^2\big)^{\frac12}:p_1<p_2<\cdots<p_{m+1}\}.
$$

\eje\label{james}
The unit vector basis $(u_n)$ of James space $J$ is RUD. In fact, it is a conditional RUD basis whose expected value is the unit basis of $\ell_2$.
\feje

\begin{proof}
Let us consider an arbitrary sequence of scalars $(a_i)_{i=1}^m$ and let $p_1<p_2<\cdots<p_n$ be such that
$$
\|\sum_{i=1}^ma_iu_i\|_J=\big(\sum_{j=1}^n(a_{p_j}-a_{p_{j+1}})^2\big)^{\frac12}.
$$
It follows that
\begin{equation}
\nrm{\sum_i a_i u_i}_J^2= \sum_{j=1}^n(a_{p_j}-a_{p_{j+1}})^2 \le \sum_{j=1}^n(a_{p_j}^2+a_{p_{j+1}}^2)\le \sum_{i} a_i^2
\end{equation}
 Hence,
 \begin{equation}\label{dfsfdsdfddd333ds}
\nrm{\sum_i a_i u_i}_J \le \nrm{\sum_i a_i u_i}_{\ell_2}
 \end{equation}
On the other hand, if $(u_{n_i})_i$ is such that $n_{i+1}-n_{i}>1$, then it follows that
\begin{equation}
\nrm{\sum_i a_i u_{n_i}}_J\ge (\sum_i a_i^2)^{1/2}.
\end{equation}  	
Since the unit basis of $\ell_2$ is spreading it follows that every such subsequence is 1-equivalent to the unit basis of $\ell_2$. Hence,
\begin{equation}
\mathbb E \nrm{\sum_i \vep_i a_i u_{2i}}_J=\mathbb E \nrm{\sum_i \vep_i a_i u_{2i+1}}_J=(\sum_{i} a_i^2)^{1/2}.
\end{equation}
Consequently,
\begin{equation} \label{dfsssfdsdfddd333ds}
\mathbb E \nrm{\sum_i a_i u_i}_J=
\mathbb E \nrm{\sum_i \vep_i a_{2i} u_{2i}}_J+ \mathbb E \nrm{\sum_i \vep_i a_{2i+1} u_{2i+1}}_J \approx (\sum_i a_i^2)^{1/2}.
\end{equation}
Now it follows from \eqref{dfsfdsdfddd333ds} and  \eqref{dfsssfdsdfddd333ds} that  the unit basis of $J$ is RUD with constant $\sqrt{2}$.
\end{proof}

This fact also shows that spaces with RUD bases  need not be embeddable into a space with unconditional basis.  Note that there is an analogous situation if we replace the role of the $\ell_2$ in the construction of James
space by an unconditional basis.
\eje
Let $(x_n)$ be an unconditional basis for the space $X$, and let $J_X$ be the generalized James space, which
is the completion of $c_{00}$ under the norm
$$
\|(a_n)\|_{J_X}=\sup\Big\{\Big\|\sum_{k=1}^{m-1}(a_{p_k}-a_{p_{k+1}})x_k\Big\|_X:p_1<\cdots<p_m\Big\}.
$$
The unit vector basis $(u_n)$ of $J_X$ is RUD. If $(x_n)$ is not equivalent to the $c_0$-basis, then $(u_n)$
is not unconditional. If in addition the basis $(x_n)_n$ is spreading, then
\begin{equation}
\mathbb E\nrm{\sum_n a_n u_n}_{J_X}\approx \nrm{\sum_n a_n x_n}_X.
\end{equation}
\feje

\begin{proof}
Fix a sequence of scalars $(a_i)_{i=1}^m$ and let $p_1<p_2<\cdots<p_n$ be such that
$$
\|\sum_{i=1}^ma_iu_i\|_{J_X}=\nrm{\sum_{j=1}^n(a_{p_j}-a_{p_{j+1}})x_j}_{X}.
$$
Now let $\tau:\{-1,+1\}^m\rightarrow \{-1,+1\}^m$ be defined in the following way: For
$\Theta=(\theta_i)_{i=1}^m$, let $\tau(\Theta)=(\theta'_i)_{i=1}^m$ be given by
$$
\theta'_i=
\left\{
\begin{array}{cl}
 \theta_i &\textrm{ if }i\notin\{p_1,\ldots,p_{n+1}\}   \\
 (-1)^j\theta_{p_j} & \textrm{ if } i=p_j.
\end{array}
\right.
$$
Now using that
$$
|a_{p_j}-a_{p_{j+1}}|\leq \max\{|\theta_{p_j}a_{p_j}-\theta_{p_{j+1}}a_{p_{j+1}}|, |\theta'_{p_j}a_{p_j}-\theta'_{p_{j+1}}a_{p_{j+1}}|\}
$$
and the fact that $(x_n)_n$ is  $C$-unconditional, we have
\begin{align*}
\|\sum_{i=1}^m a_iu_i\|_{J_X} = & \nrm{\sum_{j=1}^n(a_{p_j}-a_{p_{j+1}})x_j}_X \leq  C\nrm{\sum_{j=1}^n(\theta_{p_j}a_{p_j}-\theta_{p_{j+1}}a_{p_{j+1}})x_j}_X+  \\
+& C \nrm{\sum_{j=1}^n(\theta'_{p_j}a_{p_j}-\theta'_{p_{j+1}}a_{p_{j+1}})x_j}_X   \le\\
\le & C( \nrm{\sum_{i=1}^ma_i\theta_iu_i }_{J_X}+\nrm{\sum_{i=1}^ma_i\theta'_iu_i}_{J_X}).
\end{align*}
Since this holds for every choice of $(\theta_i)_{i=1}^m$ and $\tau$ is an involution
($\tau(\tau(\Theta))=\Theta$), taking averages at both sides gives us
$$
\|\sum_{i=1}^m a_iu_i\|_J \leq 2C\, \mathbb{E} \Big(\Big\|\sum_{i=1}^m \theta_i a_i u_i \Big\|_J  \Big).
$$
Now, to
check that $(u_n)$ is not unconditional, note that for every $k\in\mathbb N$, we have $\|\sum_{n=1}^k
u_n\|_{J_X}=1$, while
$$
\Big\|\sum_{n=1}^{2k} (-1)^n u_n\Big\|_{J_X}\geq \Big\|\sum_{n=1}^k x_n\Big\|_X.
$$
Hence, if $(u_n)$ were unconditional, then there would be a constant $C>0$ such that $\|\sum_{n=1}^k
x_n\|_X\leq C$. This is imposible because $(x_n)$ is not equivalent to the unit basis of $c_0$.
\end{proof}

\eje	
The well-known twisted sum of $\ell_2$ with $\ell_2$ by N. Kalton and N. Peck \cite{KP} has a natural 2-dimensional unconditional f.d.d. but it does not have an unconditional basis. Hence, by Corollary \ref{ufdd} it has a conditional RUD basis. In general, the non-trivial twisted sum of two spaces with unconditional bases gives also examples of conditional RUD bases.
\feje

We will see later (Theorem \ref{ri RUD}) that every block sequence of the Haar system on a rearrangement
invariant space with finite  upper Boyd index is RUD. In particular, the Haar basis in $L_1(0,1)$ is another
example of a RUD basis which is not unconditional. We also have the following:

\eje
The Walsh basis in $L_1[0,1]$ is RUD.
\feje

\begin{proof}
Recall that the Walsh basis is the canonical extension of the sequence of Rademacher functions $(r_n)$ to an
orthonormal basis of $L_2[0,1]$. Namely, for every finite set $s\subset\mathbb{N}$ we denote
$$
w_s=\Pi_{j\in s}r_j.
$$

Since, $(w_s)_s$ are orthonormal in $L_2[0,1]$, it follows that
$$
\Big\|\sum_s a_sw_s\Big\|_1\leq\Big\|\sum_s a_sw_s\Big\|_2=\Big(\sum_s a_s^2\Big)^\frac12.
$$
Now, since $(w_s)_s$ are also normalized in $L_1[0,1]$, and this space has cotype 2, it follows that
$$
\mathbb{E}\Big\|\sum_s a_s\vep_s w_s\Big\|_1\gtrsim \Big(\sum_s a_s^2\Big)^\frac12\geq \Big\|\sum_s a_sw_s\Big\|_1.
$$
Hence, $(w_s)_s$ is RUD.
\end{proof}

\eje
The Rademacher functions in $BMO[0,1]$ are a RUC basic sequence.
\feje

\begin{proof}
Recall the norm of the space $BMO[0,1]$ is given by
$$
\|f\|_{BMO[0,1]}=\sup_{I\subset[0,1]}\frac{1}{\lambda(I)}\int_I \Big|f-\frac{1}{\lambda(I)}\int_I fd\lambda\Big|d\lambda,
$$
where $\lambda$ denotes Lebesgue's measure on $[0,1]$. It is easy to check that for the Rademacher functions
$(r_n)$ we have
$$
\Big\|\sum_n a_nr_n\Big\|_{BMO[0,1]}=\Big(\sum_n a_n^2\Big)^\frac12+\sup_n\Big|\sum_{k=1}^n a_k\Big|.
$$
Hence, using the computations given in the proof of Example \ref{summing}, we have
$$
\mathbb{E}\Big\|\sum_n a_n\vep_n r_n\Big\|_{BMO[0,1]}\leq 3\Big(\sum_n a_n^2\Big)^\frac12\leq 3\Big\|\sum_n a_n r_n\Big\|_{BMO[0,1]}.
$$
\end{proof}

\eje
A conditional RUC basis  of $\ell_p$ and a conditional RUD basis of $\ell_p$ for $1<p<\infty$.
\feje
\begin{proof}
Let $(x_n)_n$ and $(y_n)_n$ be a Besselian non-Hilbertian, and Hilbertian non-Besselian bases of $\ell_2$,
respectively. Find a sequence of successive intervals $(I_k)_k$ such that $\bigcup_k I_k=\mathbb N$ and that
$(x_i)_{i\in I_k}$ and $(y_i)_{i\in I_k}$ are not $k$-Hilbertian and not $k$-Besselian, respectively. Since
$\langle x_i \rangle_{i\in I_k}$ and $\langle y_i \rangle_{i\in I_k}$ are (isometrically) finite dimensional
Hilbert spaces, of dimensions $d_k$ and $l_k$ respectively, and since $(\bigoplus_k\ell_2^{d_k})_{\ell_p},
(\bigoplus_k\ell_2^{l_k})_{\ell_p}$ are isomorphic to $\ell_p$, for $1<p<\infty$, the sequences $(x_i)_{i }$
and  $(y_i)_{i}$ are, in the natural ordering, bases of $\ell_p$. On the other hand, given scalars $(a_i)_i$,
one has that
\begin{align*}
\mathbb E_\vep \nrm{\sum_{k}\sum_{j\in I_k}\vep_j a_j x_j}\approx &(\mathbb E_\vep \nrm{\sum_{k}\sum_{j\in I_k}\vep_j a_j x_j}^p)^{\frac1p}=
(\mathbb E_\vep  \sum_{k}(\nrm{\sum_{j\in I_k}\vep_j a_j x_j}_{2})^p)^{\frac1p}=\\
=&( \sum_{k}\mathbb E_\vep (\nrm{\sum_{j\in I_k}\vep_j a_j x_j}_{2})^p)^{\frac1p}\approx
(\sum_{k} (\mathbb E_\vep \nrm{\sum_{j\in I_k}\vep_j a_j x_j}_{2})^p)^{\frac1p}=\\
=&(\sum_k (\sum_{j\in I_k}a_j^2)^{\frac p2})^{\frac1p}
\end{align*}
and similarly
\begin{align*}
\mathbb E_\vep \nrm{\sum_{k}\sum_{j\in I_k}\vep_j a_j y_j}\approx &(\sum_k (\sum_{j\in I_k}a_j^2)^{\frac p2})^{\frac1p}
\end{align*}
Hence, since $(x_n)_n$ is a Besselian  basis of $\ell_2$ , it follows that
\begin{align*}
\mathbb E_\vep \nrm{\sum_{k}\sum_{j\in I_k}\vep_j a_j x_j}\approx &(\sum_k (\sum_{j\in I_k}a_j^2)^{\frac p2})^{\frac1p} \lesssim  \nrm{\sum_{k}\sum_{j\in I_k}  a_j x_j}
\end{align*}
So, $(x_i)_i$ is a conditional RUC basis of $\ell_p$. And since $(y_n)_n$ is a Hilbertian basis of $\ell_2$,
it follows that
\begin{align*}
\mathbb E_\vep \nrm{\sum_{k}\sum_{j\in I_k}\vep_j a_j x_j}\approx &(\sum_k (\sum_{j\in I_k}a_j^2)^{\frac p2})^{\frac1p} \gtrsim  \nrm{\sum_{k}\sum_{j\in I_k}  a_j x_j}
\end{align*}
So, $(y_i)_i$ is a conditional RUD basis of $\ell_p$.
\end{proof}

There are further examples that have been considered in the literature. For instance, in \cite{KS} it is shown that the Olevskii system, an orthonormal system which is simultaneously a basis in $L_1[0,1]$ and a basic sequence in $L_\infty[0,1]$, forms an RUC basis in $L_p[0,1]$ if and only if $2\leq p<\infty$. In fact, this is an RUC basis of every rearrangement invariant (r.i.) space $X$ with finite cotype and upper Boyd index $\beta_X<1/2$ \cite[Theorem 1]{KS}. These results are extended in \cite{DSS} where the authors study conditions for an r.i. space to have a complete orthonormal uniformly bounded RUC system.

In the non-commutative setting there are also interesting examples of RUC bases. For instance, in the space $C^p$ (compact operators $a:\ell_2\rightarrow\ell_2$ such that $\sigma_p(a)=(tr(aa^*)^{p/2})^{1/p}<\infty$) it is well-known that the canonical basis $(e_n\otimes e_m)_{n,m=1}^\infty$ is not unconditional for $p\neq2$. However, for $2\leq p<\infty$, $(e_n\otimes e_m)_{n,m=1}^\infty$ is a RUC basis \cite[Theorem 3.1]{BKPS}. Hence, by Proposition \ref{duality} and the duality between $C^p$ and $C^{p/p-1}$, it follows that for $1<p\leq2$,  $(e_n\otimes e_m)_{n,m=1}^\infty$ is a RUD basis (which of course cannot be RUC). Surprisingly enough, in \cite{Garling-Tomczak} it was shown that the space $C^p$ also has a RUC basis for $1\leq p\leq2$.

More examples in the non-commutative context can be found in \cite{DS}. Also, in \cite{Witvliet}, the connection between R-boundedness, UMD spaces and RUC Schauder decompositions is explored.

\section{Uniqueness of bases}

Another point worth dwelling on is the uniqueness of RUD or RUC basis on some Banach spaces. Concerning
unconditionallity, it is well known that the only Banach spaces with a unique unconditional basis (up to
equivalence) are $\ell_1$, $\ell_2$ and $c_0$ (cf. \cite{LT1}). Using \cite[Prop. 2.1]{BKPS}, one can see
that every RUC basis in $\ell_1$ must be equivalent to the unit vector basis (See Theorem \ref{iueiuhuitrtr}
below).
Note also that there are RUC basis of $c_0$ which are not RUD (see \cite[Prop. 2.2]{BKPS}, or use the
construction of \cite{Wojtaszczyk} starting with the summing basis of $c_0$).

In $\ell_2$ we can find bases which are RUD but not RUC, or viceversa. Indeed, for every basis $(e_n)$ in
$\ell_2$, using the parallelogram law we know that
$$
\mathbb{E} \Big\|\sum_{i=1}^m \epsilon_i a_i e_i \Big\|^2 =\sum_{i=1}^m a_i^2.
$$
\begin{definition}
A basis $(x_n)_n$ is called Besselian  if there is a constant $K>0$ such that
\begin{equation}
(\sum_n a_n^2)^{\frac12} \le K\nrm{\sum_n a_n x_n} \text{ for every sequence of scalars $(a_n)_n$.}
\end{equation}
A basis $(x_n)_n$ is called Hilbertian  if there is a constant $K>0$ such that
\begin{equation}
\nrm{\sum_n a_n x_n}\le K (\sum_n a_n^2)^{\frac12} \text{ for every sequence of scalars $(a_n)_n$.}
\end{equation}
\end{definition} Thus, every non-Besselian (respectively non-Hilbertian) basis of $\ell_2$ is not RUC (resp.
RUD). A combination of a non RUD basis with a non RUC one yields a basis of $\ell_2$ which fails both
properties.

\teor[P. Billard, S. Kwapie\'n, A. Pelczy\'nski and Ch. Samuel \cite{BKPS}] \label{iueiuhuitrtr}
Every  RUC basis of $\ell_1$ is equivalent to the unit basis of $\ell_1$.
\fteor
\begin{proof}
Fix a RUC  basis $(x_n)_n$ of $\ell_1$ with constant $C$. Let $(x_n^*)_n$ be the biorthogonal sequence to
$(x_n)_n$.  Let $K$ be the cotype constant of $\ell_1$. Define the  operator $T:L_1([0,1],\ell_1)\to \ell_2$
defined by
$$ T(f):=\sum_{n=1}^\infty (\int_0^1 x_n^*(f(t))r_n(t) dt)u_n $$
for every $f\in L_1([0,1],\ell_1)$.  It is well-defined and bounded:
\begin{align*}
\nrm{T(f)}_2 = &\left( \sum_n \left(\int_0^1 x_n^*(f(t)) r_n(t)dt \right)^2 \right)^\frac12\le \int_0^1 \left( \sum_n \left( x_n^*(f(t)) r_n(t)\right)^2dt  \right)^\frac12 = \\
=&  \int_0^1 \left( \sum_n \left( x_n^*(f(t)) \right)^2dt  \right)^\frac12  \le  K \int_0^1  \mathbb E_\vep \nrm{\sum_n \vep_n x_n^*(f(t))  x_n}dt   \le \\
\le &  C\cdot K \int_0^1  \nrm{\sum_n x_n^*(f(t)) x_n}dt  = C\cdot K \int_0^1 \nrm{f(t)}dt =C\cdot K \nrm{f}
\end{align*}
Since $L_1([0,1],\ell_1)$ is a $\mathcal L_1$-space, it follows that the operator $T$ is absolutely summing,
with absolutely summing constant $K_G\nrm{T}$. It follows that for every sequence of scalars $(a_i)_{i=1}^n$
one has that
\begin{align*}
\sum_{i=1}^n |a_i|=&\sum_{i=1}^n \nrm{T(a_i r_i(\cdot) x_i)}\le K_G \nrm{T} \max_{\vep} \nrm{\sum_{i=1}^n  \vep_i a_i r_i(\cdot) x_i}\le\\
\le &  K_G \cdot C \cdot K \nrm{\sum_{i=1}^n  a_i r_i(\cdot) x_i}
\le  K_G \cdot C^2 \cdot K \nrm{\sum_{i=1}^n  a_i x_i}.
\end{align*}
\end{proof}

\begin{corollary}
A Banach space  has a unique (up to equivalence) RUC basis if an only if it is isomorphic to $\ell_1$.
\end{corollary}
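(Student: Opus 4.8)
The plan is to prove the two implications separately: the reverse one is essentially Theorem~\ref{iueiuhuitrtr}, and the direct one reduces, via a short observation, to the classical uniqueness theorem for unconditional bases.

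If $X$ is isomorphic to $\ell_1$, then $X$ carries a RUC basis, since the unit vector basis of $\ell_1$ is $1$-unconditional and hence $1$-RUC (Proposition~\ref{characterization_ruc}); and by Theorem~\ref{iueiuhuitrtr} every RUC basis of $\ell_1$ is equivalent to it. So all RUC bases of $X$ are mutually equivalent, which is the ``if'' part.

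For the converse, assume that $X$ has, up to equivalence, a unique RUC basis $(x_n)_n$, say with constant $C$. First I would check that $(x_n)_n$ must be \emph{unconditional}. Fix any sign sequence $\sigma=(\sigma_n)_n$. Then $(\sigma_n x_n)_n$ is again a basis of $X$, and it is $C$-RUC: for scalars $(a_i)_{i=1}^m$, writing $b_i=\sigma_i a_i$ we get $\mathbb{E}_\vep\nrm{\sum_{i=1}^m\vep_i a_i(\sigma_i x_i)}=\mathbb{E}_\vep\nrm{\sum_{i=1}^m\vep_i b_i x_i}\le C\nrm{\sum_{i=1}^m b_i x_i}=C\nrm{\sum_{i=1}^m a_i(\sigma_i x_i)}$. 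By the assumed uniqueness, $(\sigma_n x_n)_n$ is equivalent to $(x_n)_n$ for \emph{every} $\sigma$, and this says exactly that whenever $\sum_n a_n x_n$ converges so does $\sum_n \sigma_n a_n x_n$; i.e.\ $(x_n)_n$ is an unconditional basis.

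The second and final step closes the argument. Every unconditional basis of $X$ is RUC (its sign-average is dominated by the maximum, which is controlled by the unconditionality constant), hence, by uniqueness, equivalent to $(x_n)_n$; so $X$ has a unique unconditional basis. By the Lindenstrauss--Pe\l czy\'nski theorem \cite[2.b]{LT1}, $X$ is then isomorphic to $c_0$, $\ell_1$ or $\ell_2$. Finally, $c_0$ and $\ell_2$ are ruled out, because each of them carries a \emph{conditional} RUC basis (recalled in the Introduction), which cannot be equivalent to its unconditional unit vector basis; hence $X\cong\ell_1$. I do not expect a genuine obstacle here --- the heavy lifting is Theorem~\ref{iueiuhuitrtr} together with the cited structure theorem --- the only point deserving care being the first step, namely recognizing that ``$(x_n)_n$ is equivalent to each of its sign-changes'' is literally the definition of unconditionality, so that no uniform-boundedness argument is needed.
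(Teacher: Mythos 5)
Your proof is correct and follows essentially the same route as the paper: apply Theorem~\ref{iueiuhuitrtr} for one direction, and for the other observe that every sign-change $(\sigma_n x_n)_n$ is again a RUC basis, deduce unconditionality and uniqueness of the unconditional basis, invoke the $c_0,\ell_1,\ell_2$ classification, and rule out $c_0$ and $\ell_2$ via their conditional RUC bases. The only (valid) deviation is your remark that the qualitative equivalence of $(x_n)_n$ with all its sign-changes already \emph{is} unconditionality, so the uniform boundedness step the paper uses to extract a uniform constant is not strictly needed.
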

\begin{proof}
The previous Theorem \ref{iueiuhuitrtr} proves that $\ell_1$ has a unique RUC basis. Suppose now that $X$ is
a space with the same property. Fix a RUC basis $(x_n)_n$ of $X$. It follows that $(\vep_n x_n)_n$ is a RUC
sequence of $X$ for every sequence $(\vep_n)_n$ of signs. Hence, by hypothesis, it is equivalent to $(x_n)_n$
a simple uniform boundedness principle shows that there is a constant $K$ such that
$$\nrm{\sum_n a_n x_n}\le K \nrm{\sum_n \vep_n a_n x_n}$$
for every sequence of scalars. Hence, $(x_n)_n$ is the unique unconditional basis of $X$. It follows then
that $X$ is isomorphic to either $c_0$, $\ell_1$ or $\ell_2$. We have already said that $c_0$ and $\ell_2$
have conditional RUC bases.
\end{proof}

Theorem \ref{iueiuhuitrtr} also motivates the following question: Is every basis of $\ell_1$ a RUD basis? It is not hard to check that every triangular basis of $\ell_1$ is RUD (in particular, every Bourgain-Delbaen basis of $\ell_1$ is RUD). The same question for $L_1$ is also open.

\subsection{Uniqueness of RUD bases}

\teor
Every Banach space with an RUD basis has two non-equivalent RUD bases.
\fteor
The proof has two parts.
\lema
Suppose that $X$ is a space with an RUD basis and not isomorphic to $c_0$. Then $X$ has two non-equivalent
RUD bases.
\flema
\begin{proof}As in the proof of the previous corollary such
space has a unique unconditional basis; hence it must be isomorphic to $c_0$, $\ell_1$ or $\ell_2$.  It
cannot be $c_0$ by hypothesis, or $\ell_2$ as this space has a Hilbertian conditional basis; in $\ell_1$ the
sequence $(x_n)_n$ defined by $x_0=u_0$, $x_{n+1}=u_{n+1}-u_n$ is a  conditional basis of $\ell_1$ such that
$\mathbb E_\vep\nrm{\sum_n a_n x_n}\approx \sum_n |a_n|$, hence RUD.
\end{proof}
The next is the key result
\lema\label{iuuirtgbjbjkgf322}
$c_0$ has two non-equivalent RUD bases.
\flema

The proof of this Lemma is based on the Bourgain-Delbaen construction of  $\mathcal L_\infty$ spaces with the Schur property in  \cite{Bourgain-Lp, BD}, and we follow the exposition and notation of \cite{Haydon}.  In fact, the authors construct for arbitrarily large $n$ a basis $(d_i)_{i=1}^n:=(d_i^{(n)})_{i=1}^n$ of $\ell_\infty^n$  a partition $A\cup B\cup C=\{1,\dots,n\}$ and a constant $K$ independent on $n$ such that
\begin{enumerate}
\item[(i)] $(d_i)_{i\in A}$, $(d_i)_{i\in B}$ and $(d_i)_{i\in C}$ are $K$-equivalent to the unit basis of  $(\oplus_{j=1}^k \ell_\infty^{n_j})_{\ell_1}$, $(\oplus_{j=1}^l \ell_\infty^{m_j})_{\ell_1}$ and of $\ell_\infty(r)$ respectively.
\item[(ii)] $k$ and $l$ grow to infinity as $n$ grows to infinite.
\end{enumerate}
It follows then from (i) that the basis $(d_i)_{i=1}^n$ is at most $K$-equivalent to the unit basis of $\ell_\infty^n$. Now the canonical basis $(d_i)_i$ of $c_0=(\oplus_n \ell_\infty^n)_{c_0}$ extending each  $(d_i^{(n)})_{i=1}^n$ cannot be, by the condition (ii), equivalent to the unit basis of $c_0$. On the other hand,  it will follow from Proposition \ref{hohriogihohgfhg} that $(e_i)_i$ is RUD.

 We will begin by recalling the badly unconditional RUD-bases of $\ell_\infty^n$. Fix $\la>1$, and $b<1/2$ such that
$$ 1+2b\la\le \la.$$
Let $\De_0:=\{0\}$; Suppose defined $\De_{n}$, and set  $\Ga_n:=\bigcup_{k\le n}\De_n$. Let $\De_{n+1}$ be
the collection of all quintuples $(m,\vep_0,\vep_1,\sig_0,\sig_1)$ such that $\vep_0,\vep_1\in \{-1,1\}$,
$\sig_0\in \Ga_m$ and $\sig_1\notin \Ga_m$. Let $\Ga_{n+1}:=\Ga_n\cup \De_{n+1}$. For every $n$, fix a total
ordering $\prec_n$ of the finite set  $\De_{n}$, and let $\prec^n$ be the total ordering on $\Ga_n$ extending
the fix orderings $\prec_n$ on each $\De_m$, and such that each element of $\De_{m}$ is strictly smaller than
each element of $\De_{m+1}$.

We define vectors $(d_{\sig}^*)_{\sig\in \Ga_{n}}\subset \ell_1(\Ga_{n+1})$ and $(d_{\sig}^{(n)})_{\sig\in
\Ga_{n}}\subset \ell_\infty(\Ga_n)$ with the following properties:
\begin{enumerate}
\item[(a)] $(d_{\sig}^*,d_{\sig}^{(n)})_{\sig\in \Ga_n}$ is a biorthogonal
 sequence with $1\le \nrm{d_{\sig}^*}_{\ell_1}, \nrm{d_{\sig}^{(n)}}_\infty\le \la$.
\item[(b)]  $(d_\sig^*)_{\sig\in \Ga_n}$, ordered by $\prec_n$,  is a Schauder basis of $\ell_1(\Ga_n)$ with basis constant $\le \la$.

\end{enumerate}
The construction of $(d_\sig^*,d_{\sig}^{(n)})$, $\sig\in \Ga_n$, is done inductively on $n$: For $n=0$, let
$d_{0}^*=d_{0}^{(0)}:=u_0$. Suppose all done for $n$, and for each $m\le n$, let $P_m^*: \ell_1(\Ga_n)\to
\ell_1(\Ga_m)$ be the canonical projection
$$P_m^*:=\sum_{\tau \in \Ga_m} d_\tau^{(n)}\otimes d_\tau^*$$
of norm $\le\la$ associated to the basis $(d_\tau^*)_{\tau\in \Ga_n}$. For $\sig\in \De_{n+1}$,
$\sig=(m,\vep_0,\vep_1,\sig_0,\sig_1)$, let
\begin{align*}
d_\sig^*:= & u_\sig^*-c_{\sig}^* \\
c_\sig^*:= & \vep_0 u_{\sig_0}^*+ \vep_1 b(u_{\sig_1}^*-P_{m}^* u_{\sig_1}^*)\\
d_\sig^{(n+1)}:= & u_{\sig}.
\end{align*}
Let $\tau\in \Ga_{n}$. Then let
\begin{align*}
d_\tau^{(n+1)}:=& d_\tau^{(n)}+\sum_{\sig\in \De_{n+1}}\langle c_{\sig}^*, d_\tau^{(n)} \rangle u_\sig^*.
\end{align*}
Observe that for $\sig\in \De_m$,
\begin{equation}\label{nmfngjfngd}
d_\sig^{(n)}\upharpoonright \De_m=u_\sig.
\end{equation}
For each $m\le n$, let $D_m^{(n)}:=\langle d_{\sig}^{(n}\rangle_{\sig\in \De_m}$.

\prop\label{ioiuohoi4h543fdbf} For every $m\le n$ and every sequence of scalars $(a_\sig)_{\sig\in \De_m}$ one has that
\begin{equation}\label{jnjkruiuiee}
\max_{\sig\in \De_m} |a_\sig|\le\nrm{\sum_{\sig \in \De_m}a_\sig d_\sig^{(n)}\upharpoonright  \De_m }_\infty \le    \nrm{\sum_{\sig \in \De_m}a_\sig d_\sig^{(n)} }_\infty\le \la\max_{\sig\in \De_m} |a_\sig|.
\end{equation}
\fprop

\begin{proof}
Set $x:=\sum_{\sig \in \De_m}a_\sig d_\sig^{(n)}$. It follows from \eqref{nmfngjfngd} that $(x)_\sig= a_\sig$
for every $\sig\in \De_m$; hence, $\nrm{x\upharpoonright \De_m}_\infty\ge \max_{\sig\in \De_m}|a_\sig|$.

Let $\tau\in \De_k$ with $k\le n$.  If $k<m$, then $ (x)_\tau= 0$ because $(d_\sig^{(n)})_\tau=0$    because
$(d_\sig^{(n)})_\tau=0$ for every $\sig\in \De_m$.  If $k=m$, then $(x)_\tau=a_\tau$ because of
\eqref{nmfngjfngd}. Suppose that $m<k\le n$. We prove by induction on $k$ that
\begin{equation}
|(x)|_\tau\le \la\max_{\sig\in  \De_m}|a_\sig|.
\end{equation}
 $\tau=(l,\vep_0,\vep_1,\tau_0,\tau_1)$ with $l<k$. Then
$$(x)_\tau= \langle d_\tau^*+c_\tau^*,x\rangle=\langle c_\tau^*, x\rangle.$$
Suppose first that $l< m$. Then
\begin{align*}
|\langle c_\tau^*, x\rangle|\le  & | (x)_{\tau_0}|+ b |((I_n-P_{l,n})x)_{\tau_1}|= b |(x)_{\tau_1}|\le \la b \max_{\sig\in \De_m}|a_\sig| \le \la \max_{\sig \in \De_m} |a_\sig|.
\end{align*}
If $l\ge m$, then
\begin{align*}
|\langle c_\tau^*, x\rangle|= &   |(x)_{\tau_0}|+  0 \le \la\max_{\sig\in \De_m}|a_\sig|.
\end{align*}
\end{proof}

\prop \label{ioiodsdggfss}
Let $m_0<m_1<\cdots<m_{l}<n$. Then
$$\frac1\la\nrm{\sum_{i=0}^l\sum_{\sig\in \De_{m_i}}a_\sig d_{\sig}^{(n)}   }_\infty \le
\sum_{i=0}^l\max_{\sig\in \De_m}|a_\sig| \le \frac{1}{b} \nrm{\sum_{i=0}^l\sum_{\sig\in \De_{m_i}}a_\sig d_{\sig}^{(n)}   }_\infty,  $$
for every sequence of scalars $(a_\sig)_{\sig\in \bigcup_{i\le l}\De_{m_i}}$.
\fprop

\begin{proof}
The first inequality: Using \eqref{jnjkruiuiee} in Proposition  \ref{ioiuohoi4h543fdbf},
\begin{align*}
\nrm{\sum_{i=0}^l\sum_{\sig\in \De_{m_i}}a_\sig d_{\sig}^{(n)}   }_\infty \le \sum_{i=0}^l\nrm{\sum_{\sig\in \De_{m_i}}a_\sig d_{\sig}^{(n)} }_\infty \le \la  \sum_{i=0}^l\max_{\sig\in \De_{m_i}}|a_\sig|.
\end{align*}
For the second inequality: For each $i\le l$, let $\sig_i\in \De_{m_i}$ and $\vep_i\in \{-1,1\}$  be such
that $ \vep_i a_{\sig_i}=\max_{\sig\in \De_{m_i}}|a_\sig|$. We also suppose that $l\geq 1$, since otherwise
there is nothing to prove.  For each $0<i\le l$   We define recursively $\tau_i\in \De_{m_i+1}$  as follows.
Let $\tau_1:=  (m_0,\vep_0,\vep_1,\sig_0,\sig_1)\in \De_{m_1+1} $.  Let
$\tau_2:=(m_1,1,\vep_2,\tau_1,\sig_2)\De_{m_2+1}$; in general, let $\tau_{i}:=(m_{i-1}
,1,\vep_{m_i},\tau_{i-1},\sig_{i})$.  Set $x_{i}:=\sum_{\sig\in \De_{m_i}}a_\sig d_{\sig}^{(n)}$ for each
$i\le l$, and    $x:=\sum_{i=0}^lx_i$. Let us prove inductively that for every $0<i\le l$ one has that
$$ (x)_{\tau_{i}}= |a_{\sig_0}|+b\sum_{j=1}^{i}|a_{\sig_{j}}|= \max_{\sig\in \De_{m_0}}|a_{\sig}|+b \sum_{j=1}^{i}\max_{\sig\in \De_{m_j}}|a_{\sig}|: $$
Suppose that $i=1$. Then, using that $\tau_{1}\in \De_{m_1+1}$ implies that $\langle
d_{\tau_1}^*,x\rangle=0$, it follows that
\begin{align*}
(x)_{\tau_{1}}=& \langle u_{\tau_1}^*,x\rangle=\langle d_{\tau_1}^*+c_{\tau_1}^*,x\rangle =\langle c_{\tau_1}^*,x\rangle=  \vep_0 (x)_{\sig_0}+ b \vep_1 (x-P_{m_0}x)_{\sig_1}= \\
= & |a_{\sig_0}|+ b\vep_1( \sum_{j=0}^lx_j)_{\sig_1}=  |a_{\sig_0}|+ b\vep_1( x_1)_{\sig_1}=  |a_{\sig_0}|+ b|a_{\sig_1}|.
\end{align*}
Suppose that $\tau_{i}\in \De_{m_i+1}$ is such that
$(x)_{\tau_{i}}=|a_{\sig_0}|+b\sum_{j=1}^{i}|a_{\sig_{j}}|$. Then,
\begin{align*}
(x)_{\tau_{i+1}}=& \langle u_{\tau_{i+1}}^*,x\rangle=\langle d_{\tau_{i+1}}^*+c_{\tau_{i+1}}^*,x\rangle =\langle c_{\tau_{i+1}}^*,x\rangle=   (x)_{\tau_{i}}+ b \vep_{i+1} (x-P_{m_{i+1}}x)_{\sig_{i+1}}= \\
= & |a_{\sig_0}|+ b\sum_{j=1}^{i}|a_{\sig_{j}}|+\vep_{i+1}b( x_{i+1})_{\sig_{i+1}}=  |a_{\sig_0}|+ b\sum_{j=1}^{i+1}|a_{\sig_{j}}|.
\end{align*}

\end{proof}

\prop
The basis $(d_\sig^{(n)})_{\sig\in \Ga_n}$ of $\ell_\infty(\Ga_n)$ is RUD with constant $\le \la(2/b+1)$.
\fprop
\begin{proof}
Let $A:=\bigcup_{m<n,m\text{ even}}\De_m$, $B:=\bigcup_{m<n, m\text{ odd}}\De_m$ and $C:=\De_n$. Then, by
Proposition \ref{ioiodsdggfss},  $(d_\sig^{(n)})_{\sig \in A}$ and $(d_\sig^{(n)})_{\sig \in B}$ are $\la/b$
equivalent to the unit vector basis of $(\sum_{m\in A}\ell_\infty(\De_m))_{\ell_1}$ and of  $(\sum_{m\in
B}\ell_\infty(\De_m))_{\ell_1}$ respectively. Since these two unit vector bases are 1-unconditional, the
subsequences $(d_\sig^{(n)})_{\sig \in A}$ and $(d_\sig^{(n)})_{\sig \in B}$  are unconditional  with
constant $\le \la/b$.  Also, it follows from Proposition  \ref{ioiuohoi4h543fdbf} that  $(d_\sig^{(n)})_{\sig
\in C}$ is $\la$-equivalent to the unit vector basis of $\ell_\infty(C)$, hence unconditional with constant
$\le \la$. The desired result follows from Proposition \ref{hohriogihohgfhg} (1).

\end{proof}

We are ready to prove Lemma \ref{iuuirtgbjbjkgf322}.
\begin{proof}
For each $n$ let $\Ga_n$ be the finite sets defined above, and let $\Ga:=\bigcup_n \Ga_n$, disjoint union.
Then  $(\sum_{n\in \N} \ell_\infty(\Ga_n))_\infty$ is  isometric to $c_0(\Ga)$, which in turn is isometric to
$c_0$. We order  $\Ga$ canonically by first consider the total ordering $\prec_n$ as above and then declaring
that each element of $\Ga_m$ strictly smaller than each element of $\Ga_n$    for $m<n$. Then
$(d_\sig^{(n)})_{n\in \N,\sig\in \Ga_n}$ is a basis of  $(\sum_{n\in \N} \ell_\infty(\Ga_n))_\infty$ which is
RUD with constant $\le\la(2/b+1)$. On the other hand, this basis has arbitrary long subsequences
$\la/b$-equivalent to the unit vector basis of $\ell_1$, hence it cannot be equivalent to the unit vector
basis of $c_0$.
\end{proof}

Note this construction also provides an example of a basis $(x_n)$ such that both $(x_n)$ and its biortogonal functionsl $(x_n^*)$ are RUD, but $(x_n)$ is not unconditional.

\section{RUC, RUD and unconditional bases}\label{nounc}
It is not true that every basic sequence has a RUC or a RUD subsequence as the summing basis of $c_0$ shows. However, it is well-known that weakly-null sequences have always subsequences with some sort of partial unconditionality such as Elton's or Odell's unconditionality (see \cite{Elton}, \cite{Odell}). It is natural then to ask if weakly-null sequences have subsequences with partial random unconditionality RUC or RUD. We are going to prove that the Maurey-Rosenthal example of a weakly-null basis without unconditional subsequences has the stronger property of not having RUD subsequences.

Secondly,  we will see that RUC or RUD basic sequences do not necessarily have unconditional subsequences. Interestingly, the Johnson, Maurey, and Schechtman example of a weakly-null sequence in $L_1[0,1]$ without unconditional subsequences have a RUD subsequence as this is the case not only for $L_1[0,1]$ but also for many rearrangement invariant spaces on $[0,1]$ (see Theorem
\ref{ri RUD}). Observe that this subsequence gives an example of a weakly-null sequence without RUC subsequences. And a simple modification of the Maurey-Rosenthal example gives a RUC sequence without unconditional subsequences.

Finally, we will give an example of a RUD sequence that has a non-RUD block-subsequence; the analogue for RUC sequences can be found by taking a RUC basis of $C[0,1]$, that always exist by a result of  Wojtaszczyk in \cite{Wojtaszczyk}.

Let us first introduce some useful notation, which we will use to introduce not only the Maurey-Rosenthal example but also the ulterior examples. Given any finite set $s\subset\mathbb{N}$ of even cardinality,
let
$$
\mathcal{E}(s)=\{(\varepsilon_i)_{i\in s}\in\{-1,1\}^s:\sharp\{i\in s:\varepsilon_i=1\}=\sharp\{i\in s:\varepsilon_i=-1\}\}.
$$
This set consists of all equi-distributed signs indexed on a given set $s$. Let
$k_m=\sharp\mathcal{E}(\{1,\ldots,m\})$. Notice that the cardinality of a set $\mathcal{E}(s)$ only depends
on the cardinality of $s$, so $\sharp\mathcal{E}(s)=k_m$ for any set $s$ with $\sharp s=m$. From the central
limit theorem it follows that
$$
\lim_{m\rightarrow\infty}\frac{k_m}{2^m}=1.
$$

Maurey-Rosenthal's space $Z_{MR}$ can be described as follows: Given $\delta\in(0,1)$, take an increasing
sequence $M=\{m_n\}$ so that
\begin{equation} \label{oiojihsdfoijsdswebl}
\sum_j\sum_{k\neq j}\sqrt{\min\{\frac{m_j}{m_k},\frac{m_k}{m_j}\}}\leq\delta,
\end{equation}
and fix a one-to-one function
$$
\sigma:\mathbb{N}^{<\infty}\rightarrow \{m_k\}_{k\ge 2}
$$
such that $\sigma(s)>\sharp s$. Let
$$
\mathcal{B}_0=\{(s_1,\ldots,s_n):s_1\in\mathcal{S},\,s_1<\ldots<s_n,\,\sharp s_j\in M,\, \sharp s_{i+1}=\sigma(s_1\cup\ldots\cup s_i)>\sharp s_i\}.
$$
Let $u_n$ denote the n-th unit vector in $c_{00}$ and $u_n^*$ its bi-orthogonal functional. Let us consider
the set
$$
\mathcal{N}_0=\{\sum_{i=1}^n\frac{1}{\sqrt{\sharp s_i}}\sum_{j\in s_i}u_j^*: (s_1,\ldots, s_n)\in\mathcal{B}_0\}
$$
and define $Z_{MR}$ as the space of scalar sequences $(a_n)_{n=0}^\infty$ such that
$$
\|(a_n)\|_{Z_{MR}}=\sup\{|\langle \phi,\sum_n a_n u_n\rangle|:\phi\in\mathcal{N}_0\}\vee\sup_{n\in\mathbb{N}}|a_n|<\infty.
$$

\lema
Let $(s_1,\dots,s_n)\in \mathcal B_0$. Then
\begin{equation} \label{jkmncsaeerer}
\mathbb E_\vep \nrm{\sum_{i=1}^n\frac1{(\sharp s_i)^\frac12}\sum_{k\in s_i}\vep_k u_k}\le 3.
\end{equation}
\flema
\begin{proof}
Fix $(s_1,\dots,s_n)\in \mathcal B_0$,   fix $(t_1,\dots,t_m)\in \mathcal B_0$, and set
$\varphi:=\sum_{j=1}^n (\sharp t_j)^{-1/2}\sum_{k\in t_j}u_k$, $x_\varepsilon:=\sum_{i=1}^n (\sharp
s_i)^{-1/2}\sum_{k\in s_i}\varepsilon_ku_k$, where $\varepsilon=(\varepsilon_k)_{k\in \bigcup_{i}s_i}$ is a
sequence of signs. Let
$$i_0=\min\{i\in\{1,\ldots,n\}:s_i\neq t_i\}. $$
Then
\begin{align*}
|\langle \vphi,x_\varepsilon \rangle|=& \sum_{j=1}^m\sum_{i=1}^n\frac1{(\sharp s_i \sharp t_j)^\frac12}\sum_{k\in s_i\cap t_j} \vep_k=
\sum_{j=1}^m\left(\sum_{\sharp s_i =\sharp t_j}\frac1{\sharp s_i}\sum_{k\in s_i\cap t_j} \vep_k+ \sum_{\sharp s_i \neq\sharp t_j}
\frac1{\sharp s_i}\sum_{k\in s_i\cap t_j} \vep_k \right)=\\
=&\sum_{j<i_0} \frac1{\sharp s_j}\sum_{k\in s_j}\vep_k +\frac{1}{\sharp s_{i_0}}\sum_{k\in s_{i_0}\cap t_{i_0}}\vep_k +\sum_{j\ge i_0}\sum_{i>i_0}
\frac1{(\sharp s_i \sharp t_j)^{\frac12}}\sum_{k\in s_i\cap t_j}\vep_k.
\end{align*}
It follows from \eqref{oiojihsdfoijsdswebl} that
\begin{equation}
|\sum_{j\ge i_0}\sum_{i>i_0}
\frac1{(\sharp s_i \sharp t_j)^{\frac12}}\sum_{k\in s_i\cap t_j}\vep_k|\le \sum_{j\ge i_0}\sum_{i>i_0}
\frac{\sharp(s_i\cap t_j)}{(\sharp s_i \sharp t_j)^{\frac12}}\le \de.
\end{equation}
Hence,
\begin{align*}
\nrm{x_\varepsilon} \le \max_{m=1}^n    |\sum_{i=1}^m \frac{1}{\sharp s_i}\sum_{k\in s_i}\varepsilon_k |+1+\de.
\end{align*}
Using this inequality and  Levy's inequality,  we obtain that
\begin{align*}
\mathbb E_\varepsilon \nrm{x_\varepsilon} \le & \mathbb E_\varepsilon(\max_{m=1}^n
|\sum_{i=1}^n \frac{1}{\sharp s_i}\sum_{k\in s_i}\varepsilon_k |)+1+\de \le 2\mathbb E_\varepsilon(
|\sum_{i=1}^m \frac{1}{\sharp s_i}\sum_{k\in s_i}\varepsilon_k |)+1+\de = \\
= &  2 \int_0^1 |\sum_{i=1}^n  \frac{1}{\sharp s_i}\sum_{k\in s_i}r_k(t) |dt +1+\de \le
2 \nrm{\sum_{i=1}^n\frac{1}{\sharp s_i}\sum_{k\in s_i}\varepsilon_k u_k }_2  +1+\de  = \\
= & (\sum_{i=1}^n
\frac1{\sharp s_i})^\frac12\le 1+2\de \le 3.
\end{align*}
\end{proof}

\teor\label{Maurey-Rosenthal}
The unit vector basis $(u_n)$ in the space $Z_{MR}$ is a weakly null sequence with no RUD subsequences.
\fteor

\begin{proof}
Let $N\subset\mathbb{N}$ be any infinite set. Given any $K>0$ we will see that $(u_n)_{n\in N}$ is not
$K$-RUD.  Let $n>3K$ and  $s_1,\ldots,s_n\subset N$ such that $(s_1,\ldots,s_n)\in\mathcal{B}_0$. Then
\begin{equation}
\nrm{\sum_{i=1}^n \frac1{(\sharp s_i)^\frac12}\sum_{k\in s_i}u_k}\ge \langle \sum_{i=1}^n \frac1{(\sharp s_i)^\frac12}\sum_{k\in s_i}u_k,\sum_{i=1}^n \frac1{(\sharp s_i)^\frac12}\sum_{k\in s_i}u_k
\rangle=n,
\end{equation}
while from \eqref{jkmncsaeerer} it we have that
\begin{equation}
\mathbb E_{\varepsilon}\nrm{\sum_{i=1}^n\frac1{(\sharp s_i)^\frac12}\sum_{k\in s_i}\vep_k u_k} \le 3.
\end{equation}
Hence $(u_n)_{n\in N}$ is not $K$-RUD.
\end{proof}

We present now a RUC sequence without unconditional subsequences. Given $(a_i)_{i=1}^n\in c_{00}$, let
$$\nrm{(a_i)_i}_{\mathrm{RUC}}:= \nrm{(a_i)_i}_{\mathrm{MR}}+ \mathbb E_{\varepsilon}\nrm{(\varepsilon_i a_i)_i}_{\mathrm{MR}}.$$
Let $Z_{\mathrm{RUC}}$ be the completion of $c_{00}$ under this norm.

\teor\label{MR-RUC}
The unit basis $(u_n)_n$ of $Z_{\mathrm{RUC}}$ is a weakly-null RUC basis without unconditional subsequences.
\fteor
\begin{proof}
It is RUC: We have that
$$\nrm{(a_i)_i}_{\mathrm{RUC}}\le \nrm{(a_i)_i}_{\mathrm{MR}}+\mathbb{E}_\varepsilon \nrm{(\vep_i a_i)_i}_{\mathrm{MR}}\le 2 \nrm{(a_i)_i}_{\mathrm{RUC}}.$$
Hence,
$$\mathbb E_\varepsilon\nrm{(\varepsilon_i a_i)_i}_{\mathrm{RUC}}\le
2\mathbb E_\varepsilon\nrm{(\varepsilon_i a_i)_i}_{\mathrm{MR}} \le  2 \mathbb
E_\varepsilon\nrm{(\varepsilon_i a_i)_i}_{\mathrm{RUC}}.$$ It follows that
$$\mathbb E_\varepsilon\nrm{(\varepsilon_i a_i)_i}_{\mathrm{RUC}}\le
2\mathbb E_\varepsilon\nrm{(\varepsilon_i a_i)_i}_{\mathrm{MR}} \le 2\nrm{(a_i)_i}_{\mathrm{RUC}} $$

Hence $(u_i)_i$ is 2-RUC. On the other hand, given $(s_i)_{i=1}^n\mathcal B_0$, we have from
\eqref{jkmncsaeerer} that
\begin{align*}
\nrm{\sum_{i=1}^n \frac{1}{(\sharp s_i)^\frac12}(-1)^i \sum_{k\in s_i}u_k}_{\mathrm{RUC}}=&
\nrm{\sum_{i=1}^n \frac{1}{(\sharp s_i)^\frac12}(-1)^i \sum_{k\in s_i}u_k}_{\mathrm{MR}} + \\
+ &
\mathbb E_{\vep}\nrm{\sum_{i=1}^n \frac{1}{(\sharp s_i)^\frac12}(-1)^i \sum_{k\in s_i}\vep_ku_k}_{\mathrm{MR}} \le
6.
\end{align*}
On the other hand $\nrm{\sum_{i=1}^n (\sharp s_i)^{-1/2}\sum_{k\in s_i}u_k}_{\mathrm{RUC}}\ge n$. Thus, it
has no unconditional subsequence.

\end{proof}

\subsection{RUD basis without unconditional subsequences}
We present now a weakly-null  RUD basis   without unconditional subsequences. Given a finite set $s$, let
$\mathcal{E}(s)$ be the collection of equi-distributed signs in $s$.

Let us fix $\delta\in (0,1)$. We will take an increasing sequence of even numbers $M=\{m_n\}$ so that
\begin{enumerate}
\item[(i)] $\sum_j\sum_{k\neq j}\sqrt{\min\{\frac{m_j}{m_k},\frac{m_k}{m_j}\}}\leq\delta$, and
\item[(ii)] $\prod_{n=1}^\infty\frac{k_{m_n}}{2^{m_n}}\geq 1-\delta$.
\end{enumerate}

Fix a one-to-one function
$$
\sigma:\mathbb{N}^{<\infty}\rightarrow M
$$
such that $\sigma(s)>\sharp s$. Let
$$
\mathcal{B}=\{(s_1,\ldots,s_n):s_1\in\mathcal{S},\,s_1<\ldots<s_n,\,\sharp s_j\in M,\, \sharp s_{i+1}=\sigma(s_1\cup\ldots\cup s_i)>\sharp s_i\}.
$$

Let $u_n$ denote the $n^{\mathrm{th}}$ unit vector in $c_{00}$ and $u_n^*$ its bi-orthogonal functional. Let
us consider the set
$$
\begin{array}{ll}
\mathcal{N}=&\{\sum_{i=1}^n\frac{1}{\sqrt{\sharp s_i}}\sum_{j\in s_i}\varepsilon_i(j)u_j^*: (s_1,\ldots, s_n)\in\mathcal{B}, \textrm{ with }\varepsilon_i\in\mathcal{E}(s_i), \,\forall i=1,\ldots,n\}\\
&\cup\{\pm\sum_{i=1}^n\frac{1}{\sqrt{\sharp s_i}}\sum_{j\in s_i}u_j^*: (s_1,\ldots, s_n)\in\mathcal{B}\}\cup\{\pm u_k^*:k\in \mathbb{N}\}.
\end{array}
$$

Now, we define $Z_{\mathrm{RUD}}$ as the space of scalar sequences $(a_n)_{n=0}^\infty$ such that
$$
\|(a_n)\|_{Z_{\mathrm{RUD}}}=\sup\{\langle \phi,\sum_n a_n u_n\rangle:\phi\in\mathcal{N}\}<\infty.
$$

\teor\label{MR-RUD}
The unit basis $(u_n)$ is a RUD basis of the space $Z_{RUD}$ endowed with the norm $\|\cdot\|_{Z_{RUD}}$
without unconditional subsequences. In addition, given any infinite set $N\subset\mathbb{N}$, if for every
$n\in \mathbb{N}$ we take $s_n\subset N$ such that $(s_1,\ldots, s_n)\in\mathcal{B}$, and let
$$
x_n=\frac1{\sqrt{\sharp s_n}}\sum_{j\in s_n}u_j,
$$
then $(x_n)$ is a normalized block sequence of $(u_n)_{n\in N}$ which is not RUD.
\fteor

\begin{proof}

Let us see first that $(u_n)_{n\in\mathbb{N}}$ is RUD. To this end, we take arbitrary scalars $(a_k)_{k=1}^l$
and let us prove that
$$
\Big\|\sum_{k=1}^l a_ku_k\Big\|=\sup_{\phi\in\mathcal{N}}\langle\phi,\sum_{k=1}^l a_ku_k\rangle\leq C \mathbb{E} \Big(\Big\|\sum_{k=1}^l \epsilon_k |a_k| e_k \Big\| \Big),
$$
for some constant $C$ independent on $(a_k)_{k=1}^l$. First, for $\phi=\pm u_k$ we clearly have
$$
\langle \phi,\sum_{k=1}^l a_ku_k\rangle\leq\max_k |a_k|\leq \mathbb{E} \Big(\Big\|\sum_{k=1}^l \epsilon_k |a_k| e_k \Big\| \Big).
$$
Now, suppose $\phi$ has the form
$$
\phi=\pm\sum_{i=1}^n\frac{1}{\sqrt{\sharp s_i}}\sum_{j\in s_i}u_j^*, \hspace{1cm}\textrm{ or }\hspace{1cm} \phi=\sum_{i=1}^n\frac{1}{\sqrt{\sharp s_i}}\sum_{j\in s_i}\varepsilon_i(j)u_j^*,
$$
for some fixed $(s_1,\ldots,s_n)\in\mathcal{B}$ and $\varepsilon_i\in\mathcal{E}(s_i)$. Let us consider the
set
$$
A=\{\varepsilon\in\{-1,1\}^l:\varepsilon|_{s_i}\in\mathcal{E}(s_i) \,\forall i=1,\ldots,n\}.
$$
Hence, for $(\theta_k)_{k=1}^l\in A$, we have that $\sum_{i=1}^n\frac{1}{\sqrt{\sharp s_i}}\sum_{j\in
s_i}\theta_j u_j^*\in\mathcal{N}$ so we get that, for both cases of $\phi$,
\begin{align*}
\Big\|\sum_{k=1}^l\theta_k|a_k|u_k\Big\|\geq &\langle\sum_{i=1}^n\frac{1}{\sqrt{\sharp s_i}}\sum_{j\in s_i}\theta_j u_j^*,
\sum_{k=1}^l\theta_k|a_k|u_k\rangle=  \sum_{i=1}^n\frac{1}{\sqrt{\sharp s_i}}\sum_{j\in s_i\cap\{1,\ldots,l\}}
|a_j|\geq \\
\geq & \langle\phi,\sum_{k=1}^l a_ku_k\rangle.
\end{align*}
In particular, we have
$$
\mathbb{E} \Big(\Big\|\sum_{k=1}^l \epsilon_k |a_k| e_k \Big\| \Big)\geq \mathbb{E} \Big(\Big\|\sum_{k=1}^l \epsilon_k |a_k| e_k \Big\|\chi_A((\epsilon_k)_{k=1}^l) \Big)\geq \langle\phi,\sum_{k=1}^l a_ku_k\rangle\frac{\sharp A}{2^l}.
$$
Notice that if we denote $m_{j_i}=\sharp s_i\in M$, then the cardinality of $A$ is given by
$$
\sharp A=\prod_{i=1}^n \sharp \mathcal{E}(s_i) \times 2^{l-\sharp(s_1\cup\ldots\cup s_n)}=2^l\prod_{i=1}^n\frac{k_{m_{j_i}}}{2^{m_{j_i}}}\geq2^l\prod_{i=1}^\infty\frac{k_{m_i}}{2^{m_i}}\geq 2^l (1-\delta),
$$
because of condition $(ii)$ in the definition of the sequence $M$. Thus, we have that
$$
\mathbb{E} \Big(\Big\|\sum_{k=1}^l \epsilon_k |a_k| e_k \Big\| \Big)\geq (1-\delta)\langle\phi,\sum_{k=1}^l a_ku_k\rangle.
$$

Therefore, we finally get that for any scalars $(a_k)_{k=1}^l$
$$
\mathbb{E} \Big(\Big\|\sum_{k=1}^l \epsilon_k a_k e_k \Big\| \Big)=\mathbb{E} \Big(\Big\|\sum_{k=1}^l \epsilon_k |a_k| e_k \Big\| \Big)\geq (1-\delta)\Big\|\sum_{k=1}^l a_ku_k\Big\|,
$$
so $(u_n)_{n\in\mathbb{N}}$ is RUD.

For the second part, given an infinite set $N\subset\mathbb{N}$, let
$$
x_n=\frac1{\sqrt{\sharp s_n}}\sum_{j\in s_n}u_j,
$$
where for every $n\in \mathbb{N}$, $s_n\subset N$ is such that $(s_1,\ldots, s_n)\in\mathcal{B}$. We claim
that for any scalars $(a_j)_{j=1}^n$ we have that
$$
\Big\|\sum_{j=1}^n a_j x_j\Big\|\approx\sup_{1\leq l\leq n}\Big|\sum_{k=1}^l a_k\Big|,
$$
independently of the scalars and $n\in\mathbb{N}$. In particular, by Theorem \ref{summing}, $(x_n)$ cannot be
RUD. Besides, since this holds for any $N\subset\mathbb{N}$, no subsequence of $(u_n)_{n\in\mathbb{N}}$ can
be unconditional.

First, for $l=1,\ldots,n$ let
$$
\phi_l=\sum_{i=1}^l\frac1{\sqrt{\sharp s_i}}\sum_{k\in s_i}u_k^*\in\mathcal{N}.
$$
Hence, we have that
$$
\Big\|\sum_{j=1}^n a_j x_j\Big\|\geq\sup_{1\leq l\leq n}\langle\pm\phi_l,\sum_{j=1}^n a_j x_j\rangle=\sup_{1\leq l\leq n}\pm\sum_{i=1}^l \frac{a_i}{\sharp s_i}\sharp s_i=\sup_{1\leq l\leq n}\Big|\sum_{i=1}^l a_i\Big|.
$$

For the converse inequality, first, if $\phi$ has the form $\phi=\pm u_k^*$ for $k\in s_i$, then we have that
$$
\langle \phi, \sum_{j=1}^n a_j x_j\rangle=\frac{a_i}{\sqrt{\sharp s_i}}\leq\sup_{1\leq i\leq n}|a_i|\leq2\sup_{1\leq l\leq n}\Big|\sum_{k=1}^l a_k\Big|.
$$
Now, suppose $\phi$ has the form
$$
\phi= \sum_{i=1}^m\frac{1}{\sqrt{\sharp t_i}}\sum_{l\in t_i}\varepsilon_i(l)u_l^*,
$$
for some $(t_1,\ldots,t_m)\in\mathcal{B}$ and $\varepsilon_i\in\mathcal{E}(t_i)$ for every $i=1,\ldots,m$, or
$\varepsilon_i=\varepsilon 1_{t_i}$ for every $i=1,\ldots,m$, with $\varepsilon\in\{-1,1\}$. Let
$$
j_0=\min \{j\leq n:s_j\neq t_j\}.
$$
Hence, we can write
$$
\langle \phi,\sum_{j=1}^n a_j x_j\rangle=\overbrace{\langle \phi,\sum_{j=1}^{j_0-1} a_j x_j\rangle}^{(A)}+\overbrace{\overset{\,}{\overset{\,}{\overset{\,}{\langle \phi, a_{j_0} x_{j_0}\rangle}}}}^{(B)}+\overbrace{\langle \phi,\sum_{j=j_0+1}^n a_j x_j\rangle}^{(C)}.
$$
Since for any $i\geq j_0>j$, we have $t_i\cap s_j=\emptyset$, and $s_k=t_k$ for $k<j_0$, we get that
$$
(A)=\langle\sum_{i=1}^{j_0-1}\frac{1}{\sqrt{\sharp t_i}}\sum_{l\in t_i}\varepsilon_i(l)u_l^*,\sum_{j=1}^{j_0-1} a_j x_j\rangle=\sum_{j=1}^{j_0-1} \frac{a_j}{\sharp s_j}\sum_{k\in s_j}\varepsilon_j(k).
$$
Thus, depending on the form of $\phi$ we either have $\sum_{k\in s_j}\varepsilon_j(k)=0$ for every
$j=1,\ldots, j_0-1$, or $\sum_{k\in s_j}\varepsilon_j(k)=\varepsilon\sharp s_j$ for every $j=1,\ldots,
j_0-1$, and some $\varepsilon\in\{-1,1\}$. In any case we get
$$
(A)\leq\Big|\sum_{j=1}^{j_0-1}a_j\Big|\leq\sup_{1\leq l\leq n}\Big|\sum_{i=1}^l a_i\Big|.
$$
Now, since for $i<j_0$, we have $t_i\cap s_{j_0}=\emptyset$, we get that
$$\begin{array}{lll}
(B)&=&\langle\sum_{i=j_0}^m\frac{1}{\sqrt{\sharp t_i}}\sum_{l\in t_i}\varepsilon_i(l)u_l^*,a_{j_0} \frac{1}{\sqrt{\sharp s_{j_0}}}\sum_{k\in s_{j_0}}u_k\rangle \\
&\leq &|a_{j_0}|\Big(\frac{1}{\sqrt{\sharp t_{j_0}\sharp s_{j_0}}}\sum_{l\in t_{j_0}\cap s_{j_0}}\varepsilon_{j_0}(l)+\sum_{i>j_0}\frac{1}{\sqrt{\sharp t_i\sharp s_{j_0}}}\sum_{l\in t_i\cap s_{j_0}}\varepsilon_{j_0}(l)\Big)\\
&\leq&|a_{j_0}|(1+\sum_j\sum_{k\neq j}\sqrt{\min\{\frac{m_j}{m_k},\frac{m_k}{m_j}\}})\leq(1+\delta)|a_{j_0}|.
\end{array}
$$
So we also get that
$$
(B)\leq 2(1+\delta)\sup_{1\leq l\leq n}\Big|\sum_{i=1}^l a_i\Big|.
$$
And, finally we have
$$
\begin{array}{lll}
(C)&=&\langle\sum_{i=j_0}^m\frac{1}{\sqrt{\sharp t_i}}\sum_{l\in t_i}\varepsilon_i(l)u_l^*,\sum_{j=j_0+1}^n a_j \frac1{\sqrt{\sharp s_j}}\sum_{k\in s_j}u_k\rangle\\
&=&\sum_{i=j_0}^m\sum_{j>j_0}\frac{a_j}{\sqrt{\sharp t_i \sharp s_j}}\sum_{l\in t_i\cap s_j}\varepsilon_i(l)\\
&\leq&\sup_{j_0<j\leq n}|a_j|\sum_{i=j_0}^m\sum_{j>j_0}\frac{\min\{\sharp t_i, \sharp s_j\}}{\sqrt{\sharp t_i \sharp s_j}}\\
&\leq&\sup_{j_0<j\leq n}|a_j|\sum_i\sum_{k\neq i}\sqrt{\min\{\frac{m_i}{m_k},\frac{m_k}{m_i}\}}\leq2\delta\sup_{1\leq l\leq n}\Big|\sum_{i=1}^l a_i\Big|.
\end{array}
$$

Thus, we have seen that for every $\phi\in\mathcal{N}$
$$
\langle \phi,\sum_{j=1}^n a_j x_j\rangle\leq(3+4\delta)\sup_{1\leq l\leq n}\Big|\sum_{i=1}^l a_i\Big|.
$$
Therefore, we get that
$$
\sup_{1\leq l\leq n}\Big|\sum_{i=1}^l a_i\Big|\leq\Big\|\sum_{j=1}^n a_j x_j\Big\|\leq(3+4\delta)\sup_{1\leq l\leq n}\Big|\sum_{i=1}^l a_i\Big|.
$$
\end{proof}


\begin{problem}
Does every Banach space have a RUC or RUD basic sequence?
\end{problem}

\section{RUD sequences in rearrangement invariant spaces}\label{Sec_RUD_Banach}

In the framework of Banach lattices, Krivine's functional calculus (cf. \cite[Section 1.d.]{LT2}) allows us to
give a meaning to expressions like $\big(\sum_{i=1}^n |x_i|^p\big)^{\frac1p}$, which coincides with the
corresponding pointwise operation when we deal with a Banach lattice of functions. Using Khintchine's
inequality we get a constant $C>0$ such that for any $(x_i)_{i=1}^n$ in an arbitrary Banach lattice $X$ we
have
$$
\int_0^1\Big\|\sum_{i=1}^n r_i(t)x_i\Big\|dt\geq\frac1C\Big\|\Big(\sum_{i=1}^n|x_i|^2\Big)^{\frac12}\Big\|.
$$
Moreover, if $X$ is $q$-concave for some $q<\infty$ (equivalently, if $X$ has finite cotype) then there is a
constant $C(q)>0$ such that a converse estimate holds:
$$
\int_0^1\Big\|\sum_{i=1}^n r_i(t)x_i\Big\|dt\leq C(q)\Big\|\Big(\sum_{i=1}^n|x_i|^2\Big)^{\frac12}\Big\|.
$$

In particular, a sequence $(x_n)_{n=1}^\infty$ in a Banach lattice $X$ with finite cotype is RUD if and only
if there is $K>0$ such that for any scalars $(a_k)_{k=1}^n$
$$
\Big\|\sum_{k=1}^n a_k x_k\Big\|\leq K \Big\|\Big(\sum_{k=1}^n |a_k x_k|^2\Big)^{\frac12}\Big\|.
$$

It is reasonable to expect that if the lattice structure has a lot of symmetry, then it is easier to find RUD
sequences. This is precisely stated in the next result for rearrangement invariant spaces which makes use of
the estimates for martingale difference sequences given in \cite{JS}.

\teor\label{ri RUD}
Let $X$ be a separable rearrangement invariant space on $[0,1]$ with non-trivial upper Boyd index. Every block
sequence of the Haar basis in $X$ is RUD. In particular, every weakly null sequence $(x_n)$ in $X$ has a
subsequence which is basic RUD.
\fteor

\begin{proof}
Let $(h_j)$ denote the Haar system on $[0,1]$. That is, for $j=2^k+l$, with $k\in\mathbb{N}$ and $1\leq l\leq
2^k$, we have
$$
h_j=\chi_{[\frac{2l-2}{2^{k+1}},\frac{2l-1}{2^{k+1}})}-\chi_{[\frac{2l-1}{2^{k+1}},\frac{2l}{2^{k+1}})}.
$$
By \cite[Proposition 2.c.1]{LT2}, $(h_j)$ is a monotone basis of $X$. Let us take a block sequence
$$
y_k=\sum_{j=p_k}^{q_k}b_j h_j
$$
(with $p_k\leq q_k<p_{k+1}$). Given scalars $(a_k)_{k=1}^m$ we can consider the sequence
$$
f_n=
\left\{
\begin{array}{cc}
\sum_{k=1}^n a_ky_k   &  n<m \\
&\\
\sum_{k=1}^m a_ky_k  &  n\geq m.
\end{array}
\right.
$$ It holds that $(f_n)$ is a martingale with respect to the filtration $(\mathcal{D}_{q_n})$, where $\mathcal{D}_{q_n}$ is the smallest $\sigma$-algebra $\mathcal{A}$ for which the functions $\{h_1,\ldots,h_{q_n}\}$ are $\mathcal{A}$-measurable.

By \cite[Theorem 3]{JS} there is $C>0$, which is independent of the scalars $(a_k)_{k=1}^m$, such that
$$
\Big\|\sum_{k=1}^m a_ky_k\Big\| \leq \|\sup_n |f_n|\| \leq C \Big\|\Big(\sum_{k=1}^\infty |f_k-f_{k-1}|^2\Big)^{\frac12}\Big\|= C \Big\|\Big(\sum_{k=1}^m |a_k y_k|^2\Big)^{\frac12}\Big\|.
$$
Now, by \cite[Theorem 1.d.6]{LT2} there is a universal constant $A>0$ such that
$$
\Big\|\Big(\sum_{k=1}^m |a_k y_k|^2\Big)^{\frac12}\Big\|\leq A\int_0^1\Big\|\sum_{k=1}^m r_k(s)a_ky_k\Big\|ds.
$$
Hence, since $(x_{n_k})$ is equivalent to $y_k$ we have that
$$
\Big\|\sum_{k=1}^m a_kx_{n_k}\Big\| \leq K \Big\|\sum_{k=1}^m a_ky_k\Big\| \leq CAK\int_0^1\Big\|\sum_{k=1}^m r_k(s)a_ky_k\Big\|ds \leq CAK^2\mathbb{E}\Big(\Big\|\sum_{k=1}^m \varepsilon_n a_k x_{n_k} \Big\| \Big).
$$
\end{proof}

A similar idea has been used in \cite{AKS} to show that if a separable r.i. space $X$ on $[0,1]$ is p-convex
for some $p>1$ and has strictly positive lower Boyd index, then $X$ has the Banach-Saks property.

\begin{corollary}\label{jms}
There is in $L_1$ a RUD basic sequence without unconditional subsequences.
\end{corollary}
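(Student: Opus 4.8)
The plan is to apply the second assertion of Theorem~\ref{ri RUD} to the weakly null sequence produced by W.~Johnson, B.~Maurey and G.~Schechtman in \cite{JMS}. Recall that their construction gives a semi-normalized weakly null sequence $(f_n)_n$ in $L_1[0,1]$ with the property that \emph{no} subsequence of $(f_n)_n$ is unconditional. So the whole proof is a formal combination of this example with the machinery of the previous section.

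First I would note that $L_1[0,1]$ is a separable rearrangement invariant space on $[0,1]$ whose upper Boyd index is finite (it equals $1$), hence non-trivial in the sense required by Theorem~\ref{ri RUD}. Applying the ``in particular'' part of that theorem to the weakly null sequence $(f_n)_n$ produces an infinite set $N\subseteq\mathbb{N}$ such that $(f_n)_{n\in N}$ is a basic RUD sequence; internally this goes through passing to a subsequence equivalent to a block basis of the Haar system and invoking the first assertion of the theorem, but we may take it as a black box here.

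It then only remains to observe that $(f_n)_{n\in N}$ still has no unconditional subsequence, which is immediate since any subsequence of $(f_n)_{n\in N}$ is in particular a subsequence of the original $(f_n)_n$. This gives the desired RUD basic sequence in $L_1$ without unconditional subsequences. There is no genuine obstacle in this argument: the only two points worth spelling out are the standard computation that $\beta_{L_1}<\infty$, so that Theorem~\ref{ri RUD} indeed applies to $L_1$, and the trivial remark that the absence of unconditional subsequences is inherited by subsequences.
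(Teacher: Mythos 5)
Your proof is correct and is essentially identical to the paper's: both take the Johnson--Maurey--Schechtman weakly null sequence in $L_1$ without unconditional subsequences, extract a RUD basic subsequence via Theorem~\ref{ri RUD} (noting $L_1$ qualifies as a separable r.i.\ space with non-trivial upper Boyd index), and observe that the absence of unconditional subsequences passes to subsequences.
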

\begin{proof}
Let $(f_n)_n$ be the weakly-null basic sequence in $L_1$ without unconditional subsequences given in
\cite{JMS}. Then any RUD subsequence of $(f_n)_n$ (existing by Theorem \ref{ri RUD}) fulfills the desired
requirements.
\end{proof}

Note that the Haar basis in $L_1[0,1]$ is a conditional basis such that every block is RUD (compare with
Theorem \ref{James_thm}). We do not know if a basis with the property that every block subsequence is RUD has some unconditional block subsequence. The sequence given in Corollary \ref{jms} satisfies that every block subsequence is RUD, and fails to have an unconditional subsequence (although it has unconditional blocks).

\section{average norms}
The motivating question here is the following: Given an unconditional basic sequence $(x_n)_n$, find an RUC or RUD
basis $(y_n)_n$ such that $(r_n \otimes y_n)_n$ is equivalent to $(x_n)_n$ but
\begin{enumerate}
\item $(y_n)_n$ is not equivalent to $(x_n)_n$, or
\item $(y_n)_n$ does not contain subsequences equivalent to subsequences of $(x_n)$, or
\item $(y_n)_n$ does not contain unconditional subsequences.
\end{enumerate}
\begin{problem}
Characterize unconditional sequences $(x_n)_n$ under one of the previous criteria.
\end{problem} In the case for the unit vector basis of $c_0$ or $\ell_1$ it is not possible to find such a basis as the
following well-known theorems show. By the sake of completeness, we will reproduce the original proofs.

\teor[S. Kwapien \cite{Kw}]\label{oiio3rere}
Suppose that $(x_n)_n$ is a seminormalized basic sequence in a Banach space such that $\sup_n \mathbb
E_{\vep}\nrm{\sum_{i=1}^n \vep_i x_i}<\infty$. Then $(x_n)_n$ has a subsequence equivalent to the unit basis
of $c_0$.
\fteor
\begin{proof}
For every measurable set $B\subseteq [0,1]$ one has that  $\lim_{n\to \infty}\la (\{t\in B\, : \,
r_n(t)=1\})=\lim_{n\to \infty}\la (\{t\in B\, : \,  r_n(t)=-1\})=(1/2)\la(B)$. Let $M>0$ be such that
$A=\conj{t\in [0,1]}{\sup_n \nrm{\sum_{i=1}^n r_i(t)x_i}\le M}$ has Lebesgue measure $\la(A)>1/2$. Now let
$n_1\in \N$ be such that
\begin{equation}
\la(\conj{t\in A}{r_{n_1}(t)=1})= \la(\conj{t\in A}{r_{n_1}(t)=-1})>\frac{1}{2^2}.
\end{equation}
In general, let $(n_k)_k$ be a strictly increasing sequence of integers such that for every $k$  and every
sequence of signs $(\vep_i)_{i=1}^k$ one has that
\begin{equation} \label{lkjdflksldkfsjdfe}
\la(A({(\vep_i)_{i=1}^k}))> \frac1{2^{k+1}}.
\end{equation}
where $A({(\vep_i)_{i=1}^k})= \conj{t\in A}{r_{n_i}(t)=\vep_i \text{ for every $1\le i\le k$}} $. Let now
$s_i=r_i$ if $i\in \{n_j\}_j$ and $s_i=-r_i$ if $i\notin \{n_j\}_j$.  Let
$$B=\conj{t\in [0,1]}{\sup_n\nrm{\sum_{i=1}^n s_i(t)x_i}\le M}.$$  Since $(r_i)_i$ and $(s_i)_i$ are
equidistributed, it follows that
\begin{equation}
\la(B((\vep_i)_{i=1}^k))=\la(A((\vep_i)_{i=1}^k))>\frac1{2^{k+1}},
\end{equation}
where $B((\vep_i)_{i=1}^k)=\conj{t\in B}{s_{n_i}=\vep_i \text{ for every $1\le i \le k$}}$. Since the set
$$
\bigcap_{i=1}^k \{r_{n_i}=\vep_i\}=\conj{t\in [0,1]}{r_{n_i}(t)=\vep_i\text{ for every $1\le i\le k$}}
$$
has measure $2^{-k}$, and since
$$
A((\vep_i)_{i=1}^k),B((\vep_i)_{i=1}^k)\subseteq \bigcap_{i=1}^k \{r_{n_i}=\vep_i\}
$$
it follows that
$$
A((\vep_i)_{i=1}^k)\cap B((\vep_i)_{i=1}^k) \neq \emptyset.
$$
Let $t_0\in A((\vep_i)_{i=1}^k)\cap B((\vep_i)_{i=1}^k) $. Hence,
\begin{equation}
\nrm{\sum_{i=1}^k\vep_i x_{n_i}}=\frac{1}{2}\nrm{\sum_{j=1}^{n_k} r_{j}(t_0) x_{j} +\sum_{j=1}^{n_k}s_j(t_0)x_j}\le M.
\end{equation}
Now it is easy to deduce from here that $\nrm{\sum_{i=1}^ka_i x_{n_i}}\le M \max_{i=1}^k |a_i|$.
\end{proof}

\prop[J. Bourgain \cite{Bo2}]\label{iuuiuiere}
Suppose that $(x_n)_n$ is a bounded sequence in a Banach space $X$ such that for some constant $\de>0$ one
has that
\begin{equation}\label{nnnkfkjgdf}
\text{$\mathbb E_\vep \nrm{\sum_{i=1}^n \vep_i a_i x_i}\ge \de\sum_{i=1}^n|a_i|$ for
every sequence of scalars $(a_i)_{i=1}^n$.}
\end{equation}Then $(x_n)_n$ has a subsequence equivalent to the unit basis of
$\ell_1$.
\fprop
\begin{proof}
By Rosenthal's $\ell_1$ theorem, we may assume otherwise that $(x_n)_n$ has a subsequence which is
weakly-Cauchy. Since our hypothesis \eqref{nnnkfkjgdf} passes to subsequences, we may assume without loss of
generality that $(x_n)_n$ is weakly-convergent to $x^{**}\in X^{**}$.  It is well known that for every
$\gamma>0$ there is a convex combination $(a_i)_{i=1}^n$ such that
\begin{enumerate}
\item[(a)]
 $\nrm{\sum_{i=1}^n a_i\vep_i x_i-(\sum_{i=1}^n \vep_i a_i)x^{**}}\le \gamma$  for every sequence of
 signs $(\vep_i)_{i=1}^n$.
 \item[(b)] $\nrm{(a_i)_{i=1}^n}_2\le \gamma$.
\end{enumerate}
Indeed, for the first part, think of each $x_n-x^{**}$ as a function in $C[0,1]$, and use Mazur's result for the
weakly-null sequence $(|x_n-x^{**}|)_n$; once (a) is established for each $\gamma$, let $n$ be such that
$\gamma\sqrt{n}\ge 1$, and find $s_1<\dots <s_n$ and convex combinations $\sum_{j\in s_i}a_ju_j$ fulfilling
(a) for $\gamma/n$; then the convex combination $(1/n)\sum_{i=1}^n \sum_{j\in s_i}a_j u_j$ satisfies that
$$
\nrm{(1/n)\sum_{i=1}^n\sum_{j\in s_i}\vep_j a_j (x_j-x^{**})}\le \gamma
$$
for every choice of signs $(\vep_i)_i$, and
$$
\nrm{(1/n)\sum_{i=1}^n \sum_{j\in s_i}a_j u_j}_2\le 1/\sqrt{n}\le\gamma.
$$

Now let $(a_i)_{i=1}^n$ be the corresponding combination for $\gamma$ such that $\gamma(1+\nrm{x^{**}})<\de$.
Then
\begin{equation}
\de \le \mathbb E_\vep \nrm{\sum_{i=1}^n \vep_i a_i x_i}\le \gamma + \nrm{x^{**}}\mathbb E_\vep |\sum_{i=1}^n a_i\vep_i| \le \gamma +
\nrm{x^{**}}(\sum_{i=1}^n a_i^2)^\frac12<\de,
\end{equation}
a contradiction.
\end{proof}

\eje
For each $1<p\le 2$, on  $c_{00}$ define the norm
$$\nrm{(a_i)_{i=1}^n}_{s,p}:=\max\{ \nrm{\sum_{i=1}^n a_i
s_i}_\infty,\nrm{(a_i)_{i=1}^n}_p\},$$
 where $(s_i)_i$ is the summing basis of $c_{0}$; let $X$
be the completion of $c_{00}$ under this norm. Then the unit Hamel basis $(u_n)_n$ is RUC and satisfies that
$(r_n \otimes u_n)_n$ is equivalent to the unit basis of $\ell_p$. It is easy to see that $(u_i)_{i}$ in $X$
does not have unconditional subsequences.
\feje

\end{document}